\theoremstyle{plain}
\newtheorem{theorem}{Theorem}[section]
\newtheorem{lemma}[theorem]{Lemma}
\newtheorem{corollary}[theorem]{Corollary}
\newtheorem{examples}[theorem]{Examples}
\newtheorem{conjecture}[theorem]{Conjecture}
\newtheorem{question}[theorem]{Question}
\newtheorem*{claim*}{Claim}
\newtheorem*{theorem*}{Theorem}
\newtheorem*{lemma*}{Lemma}
\newtheorem*{proposition*}{Proposition}
\newtheorem*{corollary*}{Corollary}
\theoremstyle{definition}
\newtheorem{definition}[theorem]{Definition}
\newtheorem{fact}[theorem]{Fact}
\newtheorem{remark}[theorem]{Remark}
\newtheorem{convention}[theorem]{Convention}
\newtheorem*{definition*}{Definition}
\newtheorem*{example*}{Example}
\newtheorem*{fact*}{Fact}
\theoremstyle{remark}
\newenvironment{eqpar*}{\begin{equation*}\begin{minipage}{0.8\columnwidth}}%
{\end{minipage}\end{equation*}}
\DeclareMathOperator{\fix}{fix}
\newcommand{\Q}{\mathbb{Q}}
\newcommand{\bF}{\mathbb{F}}
\DeclareMathOperator{\powerset}{\mathcal{P}}
\DeclareMathOperator{\rat}{\mathbb{Q}}
\providecommand{\Int}{\mathbb{Z}}
\providecommand{\reals}{\mathbb{R}}
\DeclareMathOperator{\dom}{dom}
\DeclareMathOperator{\lh}{lh}
\DeclareMathOperator{\ran}{ran}
\DeclareMathOperator{\sgn}{sgn}
\providecommand{\res}{\mathbin{\upharpoonright} }
\providecommand{\conc}{ \mathbin{{}^\frown}}
\providecommand{\la}{ \langle}
\providecommand{\ra}{ \rangle}
\newcommand{\fin}{\ensuremath{\textsf{\textup{Fin}}}}
\providecommand{\CH}{\ensuremath{\textup{\textsf{CH}}}}
\providecommand{\GCH}{\ensuremath{\textup{\textsf{GCH}}}}
\providecommand{\ZFC}{\textup{\textsf{ZFC}}}
\providecommand{\ZF}{\textup{\textsf{ZF}}}
\providecommand{\HOD}{\mathbf{HOD}}
\DeclareMathOperator{\eL}{\mathbf{L}}
\DeclareMathOperator{\Ve}{\mathbf{V}}
\providecommand{\DC}{\textsf{\textup{DC}}}
\providecommand{\setdef}{\;|\;}
\DeclareMathOperator{\On}{On}
\providecommand{\Coll}{\textup{Coll}}
\author{David Schrittesser}
\title[Maximal discrete sets]{Maximal discrete sets}
\address{David Schrittesser, Kurt G\"odel Research Center, University of Vienna, Augasse 2--6, UZA 1, Building 2, 1090 Vienna, Austria}
\email{david@logic.univie.ac.at}
\subjclass[2010]{03E05, 
03E15, 
03E25,   	
03E35,   	
03E57,   	
03E60, 
03E17   
}
\date{\today\  (\xxivtime)}
\keywords{Maximal discrete sets, hypergraphs, definability, mad families, $\mathcal I$-mad families, maximal cofinitary groups, almost disjointness number, cardinal characteristics of the continuum}
\begin{document}

\begin{abstract}
We survey results regarding the definability and size of maximal discrete sets in analytic hypergraphs.
Our main examples include maximal almost disjoint (or mad) families, $\mathcal I$-mad families, maximal eventually different families, and maximal cofinitary groups. We discuss the non-increasing sequence of cardinal characteristics $\mathfrak a_\xi$, for $\xi<\omega_1$ as well as the notions of spectra of characteristics and optimal projective witnesses. We give an account of Zhang's forcing to add generic cofinitary permutations, and of a version of this forcing with built-in coding. 
\end{abstract}

\maketitle

As probably every mathematician and certainly every logician knows, with the Axiom of Choice one can construct objects which have counter-intuitive, often unrealistic, even fantastic properties---such as the Banach-Tarski decomposition. 
These mathematical objects have been called paradoxical and even ``monsters''---a wonderful and fitting metaphor! 

In this article I will discuss a number of such objects, whether they be deemed monstrous or not, arising as \emph{maximal discrete sets for hypergraphs}.
Largely, this article is to be viewed as a survey or as an attempt to lure new contributors into this area. I will also announce one or two theorems, the proof of which is left for a future paper, as well as make some conjectures.
I also give some partial results which are not published anywhere else.

\medskip


\section{A menagerie of beautiful monsters}

Recall that a \emph{hypergraph} is a a structure $G=(X,H)$ where $X$ is a set and $H \subseteq \powerset(X)$; elements of $X$ are called the \emph{vertices} of $G$, and elements of $H$ are called its \emph{hyperedges}. 
The cardinality of a hyperedge is called its \emph{arity}. 
To say that $G$ is \emph{$\kappa$-uniform}, where $\kappa$ is a cardinal, means that $\kappa$ is the arity of each of its hyperedges.
A hypergraph is called \emph{simple} if $H$ does not contain singletons, i.e., no loops (hyperedges connecting a vertex only to itself). 
Thus, we can define a \emph{simple graph} $G=(X,E)$ to be a $2$-uniform hypergraph; an \emph{edge} is a hyperedge of arity $2$.
We similarly write $<\kappa$-uniform and $\leq \kappa$-uniform to mean the obvious things.

We will be interested mostly in simple graphs, and simple hypergraphs where the arity of each hyperedge is some finite number, i.e., $<\omega$-uniform simple hypergraphs. 

\begin{convention}
In this note, \emph{hypergraph} means \emph{simple $<\omega$-uniform hypergraph} and \emph{graph} means \emph{simple graph}. 
\end{convention}

\medskip

A set $D\subseteq X$ is called $G$-discrete if
$\powerset(D)\cap H = \emptyset$, that is, if no subset of $D$ forms a hyperedge.
In the case of graphs, this is clearly the same as saying that no two (or no two distinct\footnote{This is equivalent since we are only considering simple graphs.}) elements of vertices from $D$ form an edge.
A \emph{maximal $G$-discrete set} is a discrete set which is maximal among discrete sets with respect to $\subseteq$. We will sometimes just say \emph{(maximal) discrete set} if the hypergraph in question is clear from the context.

\medskip

Now let us start with some examples of graphs.

\begin{examples}~\label{e.graphs}
\begin{enumerate}
\item $G=({}^\omega\omega,E_0)$ recalling that $E_0$ is the equivalence relation given by 
\[
x \mathbin{E_0} y \iff \lvert\{n\in\omega\setdef x(n)\neq y(n)\}\rvert<\omega. 
\]
Similarly, one can consider $G=({}^\omega 2, E_0)$, where we sloppily also write $E_0$ for the restriction of this relation to ${}^\omega 2$.
\item $G=(\reals, E)$ where $E$ is given by the equivalence relation $r \mathbin{E} s \iff r-s \in \rat$. A maximal discrete set is called a \emph{Vitali set}.
\item For this and the next two examples let $X=[\omega]^\omega$. Consider $G=(X,E)$ where
$A\mathbin{E} B \iff \lvert A \cap B\rvert =\omega$, i.e., if $A$ and $B$ are \emph{not} almost disjoint.  A maximal discrete set is known as a \emph{mad family}.\footnote{We do \emph{not} define ``mad family'' to mean an \emph{infinite} maximal $G$-discrete set.}

\item $(G,E)$ where $X={}^\omega\omega$ and $f E g$ if the graphs of $f$ and $g$ are not almost disjoint. If $\neg(f \mathbin{E} g)$ we also say $f$ and $g$ are \emph{eventually different}; a maximal discrete set is called a \emph{maximal eventually different family}.
\end{enumerate}
\end{examples}

We continue by giving some examples of hypergraphs.

\begin{examples}~\label{e.hypergraphs}
\begin{enumerate}
\item Let $X=\reals$ and $\{r_0, \hdots, r_n\} \in H$ if $r_0, \hdots, r_n$ are linearly dependent in $\reals$ seen as a vector space over $\rat$. A maximal discrete set is called a \emph{Hamel basis} of $\reals$ over $\rat$.
\item Let again $X=[\omega]^\omega$. For $s^+, s^- \in {}^{<\omega}X$ write
\[
B(s^+,s^-) = \bigcup \ran(s^+) \setminus \bigcup \ran(s^-)
\]
and let $\{A_0, \hdots, A_n\} \in H$ if 
for all $s^+, s^- \in {}^{<\omega}\{A_0, \hdots, A_n\}$, the set $B(s^+,s^-)$ is finite.
An $E$-discrete family is called an \emph{independent family (of sets)} and a maximal discrete set is called a \emph{maximal independent family (of sets)}. A similar graph has been consisered on ${}^\omega\omega$ (see, e.g., \cite{miller}).
\item Let $X=S_\infty$ (the group of permutations of $\omega$) and let 
$\{g_0, \hdots, g_n\} \in H$ if $\la g_0, \hdots, g_n\ra$, the subgroup of $S_\infty$ generated by $g_0, \hdots, g_n$, is not \emph{cofinitary}, i.e., it contains a permutation  with infinitely many fixed points other than the idendity.
A maximal discrete set is a \emph{maximal cofinitary group}. We write $G_{\mathrm{mcg}}$ for this hypergraph.
\end{enumerate}
\end{examples}

In all cases discussed in this article, the set of vertices $X$ is an effective Polish metric space, so we can use the projective hierarchy in its boldface and lightface version to measure definitional complexity. 
For all of the spaces $X$ we discuss, we can regard $\powerset([X]^{<\omega})$ as an effective Polish metric space
by indentifying it with the set $\{ \bar x \in {}^{<\omega} X \setdef \bar x_0 < \hdots < \bar x_{\lh(\bar x)-1}\}$, with the usual Polish metric structure, where $<$ is the natural strict linear order on the space $X$ in question.
Of course when we say that a hypergraph $G=(X,H)$ is $\Sigma^0_\xi$, Borel, analytic or $\mathbf{\Sigma}^1_1$, (lightface) $\Sigma^1_1$ etc., we mean that $X$ is an (effective) Polish (metric) space and the set of hyperedges $H$ is $\Sigma^0_\xi$, Borel, etc.\ as a subset of
$[X]^{<\omega}$.\footnote{We lack a good example of a $\leq\omega$-uniform hypergraph.} 

\medskip

Obviously, all of the above graphs and hypergraphs are Borel. 
We will later write down an interesting family of graphs whose complexity lies cofinally in the Borel hierarchy (cf.\ also \cite{karen})

\medskip

Of course, you might say, one can define a number of combinatorial objects in the language of hypergraphs, simply because this language is so versatile. The benefit, you might continue, of doing so is elusive since the objects in question are in fact very different.
To this I answer that to the contrary, indeed theorems can be proved at precisely this level of generality, such as the following result due to Vidny\'ansky.

\medskip

First, let us introduce some terminology:
\begin{definition}
Let $G=(X,H)$ be a hypergraph, $x\in X$ and $C\subseteq X$.
We say \emph{$x$ is $G$-independent from $C$} if $C\cup\{x\}$ is discrete.
We say \emph{$x$ is $G$-caught by $C$} to mean that on the contrary, $C\cup\{x\}$ is \emph{not} discrete.
As before we shorten these to \emph{independent} and \emph{caught} when $G$ is clear from the context.
\end{definition}

\begin{theorem}[\cite{zoltan}]\label{t.zoltan}
Suppose $\Ve=\eL$ and we are given  an analytic hypergraph $G=(X,H)$ on a Polish space which satisfies the following property:

For any $z \in {}^\omega 2$, any countable discrete set $C \subseteq X$ and any $x \in X$ which is independent from $C$,
there are $y_0,\hdots,y_n \in X$ such that 
\begin{itemize}
\item $C \cup\{y_0, \hdots, y_n\}$ is discrete but
\item $x$ is caught by $C \cup\{y_0, \hdots, y_n\}$  and 
\item for each $i<n+1$ it holds that $z$ is computable  in $x_i$ (or just hyperarithmetic  in $x_i$).
\end{itemize}
Then there is a $\mathbf{\Pi}^1_1$ maximal $H$-discrete set. 
\end{theorem}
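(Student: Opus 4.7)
My plan is to work in $\Ve=\eL$ and construct $D$ by a transfinite recursion of length $\omega_1$ along the canonical $\Sigma^1_2$ well-ordering $<_L$ of the reals, exploiting the catching hypothesis both to guarantee maximality and to encode stage-information into every vertex added, so that membership in $D$ ultimately becomes $\mathbf{\Pi}^1_1$ to decide. This is the standard ``code the construction into the object'' template for $\mathbf{\Pi}^1_1$-definable maximal families; the point of the hypothesis in Theorem~\ref{t.zoltan} is that it has been isolated in a form making the template run for any analytic hypergraph in which arbitrary reals can be hyperarithmetically coded into catchers.

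Concretely, I enumerate the points of $X$ as $\langle r_\alpha : \alpha<\omega_1\rangle$ in $<_L$-order and build an increasing chain $\langle C_\alpha : \alpha<\omega_1\rangle$ of countable $G$-discrete subsets of $X$ by recursion. At stage $\alpha$, with $C_{<\alpha}=\bigcup_{\beta<\alpha}C_\beta$, I set $C_\alpha=C_{<\alpha}$ if $r_\alpha$ is already caught by $C_{<\alpha}$. Otherwise I pick a real $z_\alpha$ that canonically codes $L_{\gamma_\alpha}$, where $\gamma_\alpha$ is the least countable ordinal such that $L_{\gamma_\alpha}$ contains the construction produced so far, apply the hypothesis to $(C_{<\alpha},r_\alpha,z_\alpha)$, and take the $<_L$-least resulting tuple $(y_0,\dots,y_n)$; then I set $C_\alpha=C_{<\alpha}\cup\{y_0,\dots,y_n\}$. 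The set $D=\bigcup_{\alpha<\omega_1}C_\alpha$ is $G$-discrete by construction and maximal because every $x\in X$ equals some $r_\alpha$ and has been caught by $C_\alpha\subseteq D$.

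For the complexity bound I observe that each $y\in C_\alpha\setminus C_{<\alpha}$ hyperarithmetically computes $z_\alpha$, and hence recovers $\alpha$ together with the whole initial segment $\langle C_\beta : \beta\le\alpha\rangle$ of the construction, using Shoenfield absoluteness plus the canonicity of the $<_L$-least rule. The predicate ``$y\in D$'' then reads: letting $\sigma$ be the canonical $\Delta^1_1(y)$-decoding of $y$, the real $\sigma$ codes a well-founded countable initial segment of $\eL$, and $y$ appears as one of the catchers produced at the corresponding stage when the recursion is reconstructed inside the decoded structure. Well-foundedness of $\sigma$ is the $\mathbf{\Pi}^1_1$ clause; everything else is arithmetic once $\sigma$ is fixed, so the overall definition is $\mathbf{\Pi}^1_1$.

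The main obstacle I anticipate is making the coding rigid enough that the externally built $D$ and the internally $\mathbf{\Pi}^1_1$-defined set really coincide. This rests on two ingredients: the hypothesis that $z_\alpha$ may be prescribed freely, so that each catcher decodes to a \emph{unique} candidate construction; and analyticity of $H$, which makes the discreteness predicate, and hence the $<_L$-least choice, absolute to any countable transitive model containing $L_{\gamma_\alpha}$. Granted these, a routine induction on $\alpha$ identifies $D$ with the $\mathbf{\Pi}^1_1$-set and finishes the proof.
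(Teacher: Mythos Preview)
The paper does not give its own proof of this theorem; it is stated as a citation from \cite{zoltan}, with the remark that earlier, hypergraph-specific versions of the construction appear in Miller's \cite{miller}. Your proposal is precisely the standard Miller-style template that underlies Vidny\'ansky's result---a recursion along $<_L$ in which the catching hypothesis is invoked at each non-trivial stage to add finitely many points that hyperarithmetically encode an $L$-level containing the construction so far, followed by a $\mathbf{\Pi}^1_1$ membership definition asserting that the canonical decoding of $y$ is well-founded and witnesses $y$ among the stage-$\alpha$ catchers---so your outline is correct and is essentially the argument the paper is pointing to.

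One small caution on the details: the absoluteness you need is really $\Sigma^1_1/\Pi^1_1$ (Mostowski) absoluteness between $V$ and the coded level $L_{\gamma_\alpha}$, applied to the analytic predicate $H$ so that ``$C$ is discrete'' and ``$x$ is caught by $C$'' are computed correctly inside $L_{\gamma_\alpha}$; invoking Shoenfield is slightly off target. You should also make explicit that $\gamma_\alpha$ is chosen large enough that $L_{\gamma_\alpha}$ is a model of a sufficient fragment of $\ZFminus$ and sees that the $<_L$-least catching tuple exists, so that the internal reconstruction of the recursion in the decoded model agrees stage-by-stage with the external one. With those points spelled out, the induction identifying $D$ with the $\mathbf{\Pi}^1_1$-defined set goes through as you describe.
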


The above is only a \emph{special case} of the theorem proved in \cite{zoltan}: Restricting our attention to maximal discrete sets makes the statement much \emph{less} abstract. 
Earlier versions of this construction, carried out for specific hypergraphs can be found in Arnie Miller's famous article \cite{miller}.



\medskip

Moreover once we are at this level of generality, i.e., at bird-eyes' view, interesting questions can be asked:

\begin{question}\label{q.measurable}
Is there a hypergraph $G$ such that the statement ``there does not exist a maximal  $G$-discrete set'' has consistency strength 
at least a measurable cardinal? What about even stronger large cardinal notions?
\end{question}

This question is motivated by the following result of Ho\-ro\-witz and Shelah (see \cite{horowitz-shelah-graphs}):
\begin{theorem}
There is a Borel graph $G$ such that the $\ZF + \DC$ $+$ ``there does not exist a maximal $G$-discrete set'' implies that $\omega_1$ is inaccessible in $\eL$.
\end{theorem}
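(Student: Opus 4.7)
The plan is to construct a specific Borel graph $G$ with two complementary properties: on one hand, $G$ admits a transfinite recursion of length $\omega_1$ producing a maximal discrete set whenever the continuum carries a well-ordering of that length inside some inner model $\eL[z]$; on the other hand, any maximal $G$-discrete set canonically codes a real $z$ witnessing that $\omega_1^V$ is non-inaccessible in $\eL[z]$. The theorem will then follow by contrapositive: from the existence of a maximal $G$-discrete set one extracts such a $z$, and a Shoenfield-style absoluteness argument pushes the information down to $\eL$ itself.

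First I would define $G = (X, E)$ as a Borel graph on a Polish space (for instance a suitable subspace of $[\omega]^\omega$ or $\Baire$) whose edge relation encodes a specific Borel ``catching'' condition engineered to have the following homogeneity: for every countable discrete $C \subseteq X$ and every $x \in X$ independent from $C$ there are continuum many $y \in X$ with $C \cup \{y\}$ still discrete but $x$ now caught by $C \cup \{y\}$. This flexibility is exactly what is needed to run the standard transfinite induction: given a real $z$ with $\omega_1^{\eL[z]} = \omega_1^V$, work inside $\eL[z]$ with its canonical well-order of the reals of length $\omega_1$ and recursively add vertices so as to eventually catch every vertex. Only countable bookkeeping choices are made at each stage, so the recursion is valid in $\ZF + \DC$.

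The crucial step is the converse: arranging the edges of $G$ so that \emph{every} maximal $G$-discrete set $D$ must internally enumerate, in a $\Delta^1_1(D)$ fashion, a cofinal $\omega$-sequence in some $\eL$-cardinal $\kappa^{+\eL}$ with $\kappa$ countable. Following the Horowitz--Shelah template, the edges should be indexed so that the only way for $D$ to catch every vertex of $X$ is for $D$ itself to code such a collapsing sequence. From this code one reads off a real $z$ such that $\omega_1^V$ fails to be a strong limit in $\eL[z]$, and hence is not inaccessible there; since ``$\omega_1^V$ is a successor cardinal'' is a $\Sigma_1$ statement with real and ordinal parameters, an absoluteness argument yields that $\omega_1^V$ is not inaccessible in $\eL$ either, completing the contrapositive.

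The main obstacle is the second property: building the Borel edge set so that maximality alone forces a coding of a collapsing sequence, without destroying Borelness. This requires a delicate combinatorial design balancing enough rigidity (to force the coding) against enough flexibility (to leave room for the recursion in the first step). A secondary difficulty is that the entire argument must be carried out in $\ZF + \DC$ rather than with full Choice, so the graph together with its coding and decoding maps must be absolutely definable to guarantee that both the extraction of $z$ from any maximal discrete set and the recursive construction of such sets go through using only countable choice.
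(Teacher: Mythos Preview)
The paper does not prove this theorem; it is quoted as a result of Horowitz and Shelah with a reference to \cite{horowitz-shelah-graphs}, and no argument is supplied. There is therefore nothing in the paper to compare your attempt against.

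That said, your proposal has the contrapositive reversed. The theorem asserts that (no maximal $G$-discrete set) $\Rightarrow$ ($\omega_1$ inaccessible in $\eL$); its contrapositive is: if $\omega_1$ is \emph{not} inaccessible in $\eL$, then a maximal $G$-discrete set \emph{exists}. That is your ``first property'' (the recursion inside $\eL[z]$ when $\omega_1^{\eL[z]}=\omega_1$), not your ``crucial step''. What you call the crucial step---extracting from a given maximal discrete set a real $z$ witnessing that $\omega_1$ is a successor in $\eL[z]$---proves the \emph{converse} implication and is irrelevant to the theorem as stated.

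Even the part of your plan that points in the correct direction has a genuine gap. A recursion of length $\omega_1$ carried out inside $\eL[z]$ yields a set that is maximal discrete \emph{in $\eL[z]$}; the theorem requires maximality in $\Ve$. Your homogeneity condition says nothing about why a vertex $x\in \Ve\setminus\eL[z]$ must be caught by the set you build. The real content of a proof along these lines is the design of $G$ so that maximality is upward absolute from $\eL[z]$ to $\Ve$---for instance by tying the failure of maximal discreteness to a regularity property (such as Lebesgue measurability of all sets of reals) whose consistency strength is already known, via Shelah's theorem, to be exactly an inaccessible.
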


Of course it is tempting to find an upper bound for the type of large cardinals appearing in Question~\ref{q.measurable}. 
Such an upper bound could be given by constructing a model in which some large class of hypergraphs do not have maximal discrete sets. But one has to be careful to exclude graphs which have maximal discrete sets provably in $\ZF$.

There are of course trivial such cases. For instance, the graph giving rise to mad families has finite maximal discrete sets.
A less trivial case is given by maximal eventually different families and maximal cofinitary groups, which were shown by Horowitz and Shelah to exist just working in $\ZF$ (see below, Theorems~\ref{borel.mcg.medf} and~\ref{my.mcg.medf}).

The following formulation, as far as I can tell, takes these obstacles into account and thus (hopefully) renders the question non-trivial:
\begin{question}\label{q.model.no.discrete}
Is there a model of $\ZFC$ where no projective hypergraph has a maximal $H$-discrete set which is $\Pi^1_1$ hard?
Is there a model of $\ZF$ where no hypergraph has a maximal $H$-discrete set which is $\Pi^1_1$ hard?
\end{question}
By the previous theorem, to construct such a model one must assume at least that $\ZFC$ is consistent with the existence of an inaccessible. 
Any answer to Question~\ref{q.model.no.discrete} would give an upper bound for what kind of large cardinal property can be substituted for measurability in Question~\ref{q.measurable}.

\section{Regularity and definability of maximal discrete sets}

It is nevertheless true that some of the examples listed above differ significantly from the others.
Namely, in all but two cases one can show that some (perhaps weak) form of the Axiom of Choice is necessary to produce maximal discrete sets for the above graphs: 
For all these hypergraphs $G$, one can construct a model of $\ZF$ without any maximal $G$-discrete sets by forcing. 
In fact usually this will hold in Solovay's model (that is in $\HOD({}^\omega\On)$ of the the generic extension by $\Coll(\omega,<\kappa)$ where $\kappa$ is inaccessible in the ground model).
In some cases we know that this is not the optimal proof in terms of consistency strength. 
For example, a model where every set has the Baire property can be obtained just from the assumption that $\ZF$ is consistent, and in this model there will be no maximal independent family of sets, no Hamel basis, no Vitali set, and no transversal for $E_0$.
To give a more recent example, Horowitz and Shelah \cite{horowitz-shelah-inacc} showed the following:

\begin{theorem}
Every model of $\ZF+\DC$ has a forcing extension in which $\DC$ holds and there is exists no infinite mad family which is $\mathbf{OD}({}^\omega\On)$. In particular, if $\ZF$ is consistent, so is $\ZF+\DC$ $+$ there is no infinite mad family.
\end{theorem}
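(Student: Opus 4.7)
The plan is to construct, in a given model $\Ve\models\ZF+\DC$, a set-forcing $\apo$ whose extension $\Ve[G]$ satisfies $\ZF+\DC$ and contains no infinite $\mathbf{OD}({}^\omega\On)$ mad family.  The essential building block is a destruction forcing: given an infinite almost disjoint family $A\subseteq[\omega]^\omega$, the Mathias-style poset $Q_A$ of pairs $(s,F)$ with $s\in[\omega]^{<\omega}$ and $F\in[A]^{<\omega}$, ordered by $(s',F')\le(s,F)$ iff $s'\supseteq s$, $F'\supseteq F$ and $(s'\setminus s)\cap\bigcup F=\emptyset$, generically adds an infinite real almost disjoint from every member of $A$, so that $A$ ceases to be maximal in the extension.

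First, I would iterate such destroyer forcings with countable support, say as $\langle \apo_\alpha,\dot Q_\beta : \alpha\le\delta,\,\beta<\delta\rangle$ for a sufficiently large set ordinal $\delta$, where at each stage $\beta$ a bookkeeping function selects an $\apo_\beta$-name $\dot\tau_\beta$ forced to enumerate an infinite almost disjoint family and sets $\dot Q_\beta=Q_{\dot\tau_\beta}$ (using trivial forcing otherwise).  Each $Q_A$ is $\sigma$-centred, and countable-support iterations of such very tame forcings fit naturally in the framework of iterated symmetric extensions (Karagila et al.): they preserve $\DC$ from a $\ZF+\DC$ ground model, so $\Ve[G]\models\ZF+\DC$.

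The hard part will be the bookkeeping that guarantees every $\mathbf{OD}({}^\omega\On)$ candidate is targeted.  An $\mathbf{OD}({}^\omega\On)$ set $A\in\Ve[G]$ has the form $\{x : \Ve[G]\models\varphi(x,s)\}$ for a formula $\varphi$ and some $s\in({}^\omega\On)^{\Ve[G]}$; it therefore admits an $\apo$-name $\dot A$ built uniformly from $\varphi$ and from any $\apo$-name $\dot s$ for $s$.  Using reflection in $\Ve$, the rank of the needed $\dot s$ can be bounded by a fixed set ordinal, so the collection of all relevant names $\dot A$ is a genuine set in $\Ve$; choosing $\delta$ to exceed its cardinality allows the bookkeeping to address each one.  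At the stage $\beta$ where $\dot A$ is addressed, the generic real for $Q_{\dot A^{G_\beta}}$ is almost disjoint from every element of $A$, so $A$ fails to be maximal in $\Ve[G]$.  The ``in particular'' clause then follows by passing to the inner submodel $\HOD({}^\omega\On)^{\Ve[G]}$ of $\Ve[G]$, in which every set is $\mathbf{OD}({}^\omega\On)$ and hence, by what we just showed, is not an infinite mad family.
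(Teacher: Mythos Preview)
First, note that the paper does not actually prove this theorem: it is quoted as a result of Horowitz and Shelah \cite{horowitz-shelah-inacc}, with no argument given. So there is no ``paper's own proof'' to compare against, only the original source.

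That said, your proposed argument has a genuine gap, and it is not the circular-looking bookkeeping (though that is already suspicious: the rank bound you invoke for $\dot s$ does not follow from reflection, since parameters in ${}^\omega\On$ can involve arbitrarily large ordinals). The real problem is more basic. An $\mathbf{OD}({}^\omega\On)$ family $A$ is defined by a formula evaluated in the \emph{final} model $\Ve[G]$. When your bookkeeping addresses $A$ at stage $\beta$, what you destroy is the set $\dot A^{G_\beta}$ computed in $\Ve[G_\beta]$; the Mathias real $r_\beta$ is almost disjoint from everything in \emph{that} set. But $A=\dot A^{G}$ as evaluated in $\Ve[G]$ may be entirely different: it can, and typically will, contain reals added at stages $\geq\beta$ --- in particular nothing prevents $r_\beta\in A$. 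So the destruction step accomplishes nothing for the family you actually care about. This is exactly why ``iterate destroyers with bookkeeping'' can push $\mathfrak a$ up but cannot, in $\ZFC$, eliminate mad families (they provably exist), and restricting attention to $\mathbf{OD}({}^\omega\On)$ families does not help: definability in the final model is not stable under your tail forcing.

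The Horowitz--Shelah argument is organized quite differently. Rather than trying to anticipate and destroy each definable candidate, they design a single forcing with strong amalgamation/homogeneity-type properties and argue directly that, for any formula $\varphi$ and any name for a parameter $s\in{}^\omega\On$, some condition forces that $\{x:\varphi(x,s)\}$ fails to be an infinite mad family. The work is in the amalgamation lemmas for the specific poset, not in bookkeeping. If you want to repair your approach you would need, at minimum, a preservation theorem saying the tail of the iteration cannot resurrect a mad family that has been destroyed --- and there is no such theorem for Mathias-style iterands in this generality.
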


For many hypergraphs $G$ it can moreover be shown that a $G$-maximal discrete set cannot be analytic (for some, as we have seen the proof goes through standard regularity properties such as measurability or the Baire property).

In contrast, as was already mentioned, the following two results were shown in 2016 by Horowitz and Shelah \cite{medf-borel,mcg-borel}:
\begin{theorem}\label{borel.mcg.medf}
In $\ZF$ one can construct a Borel maximal eventually different family as well as a Borel maximal cofinitary group.
\end{theorem}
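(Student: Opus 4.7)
The plan is to construct both objects as explicit Borel subsets, bypassing any use of the Axiom of Choice or transfinite recursion. The key idea is to produce, by a fixed Borel procedure, a canonical ``catcher'' for each potential element of the universe, and then verify that the collection of all catchers is both internally consistent (pairwise eventually different, or generating a cofinitary group) and exhaustive. The output is then a Borel set which is maximal essentially by construction.

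For the maximal eventually different family, I would seek a Borel injection $\phi : \Baire \to \Baire$ with Borel image satisfying: (i) for every $f$, the set $\{n \in \omega \setdef \phi(f)(n) = f(n)\}$ is infinite, so $\phi(f)$ catches $f$; and (ii) for every $f \neq g$, $\phi(f)$ and $\phi(g)$ are eventually different. Given such $\phi$, set $\mathcal{F} := \phi[\Baire]$. Then $\mathcal{F}$ is Borel, by (ii) its elements are pairwise eventually different, and for any $h \in \Baire$, either $h \in \mathcal{F}$, or $\phi(h) \in \mathcal{F}$ catches $h$ by (i); so $\mathcal{F}$ is maximal. To build $\phi$, I would partition $\omega$ into consecutive intervals $(I_n)_{n \in \omega}$ with $|I_n|$ growing fast, use a few coordinates of each $I_n$ to copy values of $f$ (securing (i)), and use the remaining coordinates to record $f \res n$ via a fixed injective coding scheme, so that for $f \neq g$ the encodings in $\phi(f)$ and $\phi(g)$ disagree on every sufficiently large $I_n$ (securing (ii)).

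For the maximal cofinitary group the same philosophy applies, but one must respect the group structure: the family must be closed under composition and inverses, and every nontrivial reduced word in its generators must be cofinitary. I would construct a Borel set $B \subseteq S_\infty$ of ``generators'', indexed in a Borel way by $\Baire$, such that (a) every nontrivial reduced word in elements of $B^{\pm 1}$ has only finitely many fixed points, and (b) for each $\pi \in S_\infty$ not already in $\langle B \rangle$, there exists a reduced word $w$ in $B^{\pm 1}$ and $\pi^{\pm 1}$ whose evaluation has infinitely many fixed points, so that adjoining $\pi$ breaks cofinitariness. Then $\mathcal{G} := \langle B \rangle$ is Borel, being a countable union over word lengths of Borel subsets of $S_\infty$, and is a maximal cofinitary group. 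The generators would again be produced by a Borel encoding as in the medf case, but with extra ``protected'' coordinates in each interval used to guarantee the stronger freeness-plus-cofinitariness condition on all words, and the coding arranged so that each potential outsider $\pi$ is handled by a specific generator $b_\pi \in B$ with $|\fix(b_\pi \cdot \pi)| = \omega$.

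The hard part in both cases is the simultaneous coding: a single Borel $\phi$ (respectively, a single Borel indexing of $B$) must package into each output both the ``agreement'' data needed for catching and enough information to distinguish it from every other output almost everywhere, while remaining Borel-invertible so that the image is Borel. In the group case this is further compounded by the requirement that \emph{all} reduced words, not just the generators themselves, remain cofinitary; the combinatorial bookkeeping needed to ensure that coincidences cannot accumulate across compositions is what makes the Horowitz--Shelah construction delicate, and is where I expect the bulk of the technical work to lie.
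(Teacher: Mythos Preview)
The paper does not prove this theorem; it is stated as a result of Horowitz and Shelah and cited to \cite{medf-borel,mcg-borel}. So there is no proof in the paper to compare against, and your proposal must stand on its own.

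For the maximal eventually different family, your scheme cannot work as written: conditions (i) and (ii) are jointly unsatisfiable. Take any $g\in\Baire$ and set $h=\phi(g)$. Applying (i) with $f=h$ gives that $\phi(h)$ and $h=\phi(g)$ agree infinitely often; by (ii) this forces $h=g$, i.e., $\phi(g)=g$. Since $g$ was arbitrary, $\phi$ is the identity, and then (ii) asserts that any two distinct elements of $\Baire$ are eventually different, which is false. The point is that your catcher $\phi(f)$ must catch $f$ even when $f$ is already in the image, and this feeds back destructively into the eventual-difference requirement. The actual constructions avoid this by not taking the family to be the range of a single catching map; rather one builds (for instance) a perfect tree whose branches are pairwise eventually different and arranges that every $h$ hits some branch infinitely often, without that branch being produced \emph{from} $h$ by a function defined on all of $\Baire$. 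Your interval-coding idea is in the right spirit, but the logical architecture around it needs to change.

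For the maximal cofinitary group, your outline is not wrong in spirit---indeed the paper records (Theorem~\ref{my.mcg.medf}) that one can even take the generating set to be closed---but what you have written is a description of the desiderata rather than a construction. The substantive difficulty, which you correctly flag, is arranging that \emph{every} reduced word in the generators is cofinitary while simultaneously every outsider is caught; you have not indicated any mechanism for this beyond ``extra protected coordinates'', and that is where essentially all of the content of the Horowitz--Shelah argument lies.
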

It is easy to see that for any analytic hypergraph, the existence of a Borel maximal discrete set and of an analytic such set are equivalent.

\medskip

In fact much simpler maximal discrete sets can be constructed for these two particular hypergraphs:
\begin{theorem}\label{my.mcg.medf}
In $\ZF$ one can construct
\begin{itemize}
\item A closed maximal eventually different family \cite{medf-closed,medf-closed-v2,medf-compact}. 
\item A maximal cofinitary group $\mathcal G$ with a set of generators which is closed in $S_\infty$; thus $\mathcal G$ is $\Sigma^0_2$. 
\end{itemize}
\end{theorem}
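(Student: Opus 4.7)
The plan is to construct both objects by explicit recursive tree constructions carried out in $\ZF$. The strategy in each case is to produce a closed --- in fact compact --- subset of the ambient Polish space ($\omega^\omega$ for the MED family and $S_\infty$ for the generators of the MCG) by a bookkeeping recursion that interleaves at every stage two competing demands: the ``monster'' condition (eventual difference of distinct branches, or respectively cofinitariness of every non-trivial word in the generators) and maximality.

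For the closed MED family I would build a finitely branching tree $T \subseteq \omega^{<\omega}$ by recursion on levels, and take $F := [T]$. At level $n$, for each node $s$ at that level I prescribe a finite set $K_s \subseteq \omega$ of immediate successor values, arranging two things. First, whenever two nodes $s \neq s'$ already split at some earlier level, the sets $K_s, K_{s'}$ are chosen disjoint, so that all further extensions through $s$ and through $s'$ in $[T]$ remain pointwise distinct from this level on; this is possible because $\omega$ is infinite and only finitely many previously fixed coordinates constrain the choice. Second, a bookkeeping enumeration listing with infinite repetition all pairs $(t, k) \in \omega^{<\omega} \times \omega$ ensures that whenever the current pair is $(t, k)$ with $t \notin T$ having maximal initial segment $t' \in T$, the value $k$ is placed in $K_{s''}$ for some $s'' \in T$ extending $t'$. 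Since every $g \in \omega^\omega \setminus F$ has a maximal initial segment in $T$, the bookkeeping produces a branch of $T$ through that initial segment agreeing with $g$ at infinitely many coordinates, giving maximality.

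For the MCG with closed generators I would carry out an analogous recursion to produce a finitely branching tree $T \subseteq \omega^{<\omega}$ together with a coherent system of partial injections $\sigma_s\colon \omega \rightharpoonup \omega$ indexed by $s \in T$, so that each branch $x \in [T]$ determines a permutation $g_x = \bigcup_n \sigma_{x \res n} \in S_\infty$ and the map $x \mapsto g_x$ is continuous with compact image $S$; compactness is enforced by committing, at each level $n$, to a uniform bound on the values $\sigma_s(k)$ and $\sigma_s^{-1}(k)$ for $k < n$. At each stage I extend the $\sigma_s$ so that for every non-trivial word $w$ in the letters $\sigma_s^{\pm 1}$ committed so far, any not-yet-excluded candidate fixed point of the induced partial permutation is killed, so every branch $x \in [T]$ yields generators of a cofinitary subgroup of $S_\infty$. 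Maximality is arranged by a parallel bookkeeping enumerating pairs $(p, w)$ consisting of a finite partial permutation $p$ and a word $w$ in a putative new generator: for each such pair the construction commits, at some stage, a fixed point of $w$ forced by every permutation extending $p$, so that no extension of $\langle S\rangle$ by such a permutation is cofinitary. Since $S$ is compact, $\mathcal{G} = \langle S\rangle = \bigcup_n (S \cup S^{-1})^n$ is a countable union of compact, hence closed, sets, giving the $\Sigma^0_2$ bound.

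The main obstacle, and the reason the theorem is non-trivial, is reconciling maximality with topological closure: standard maximality constructions enumerate the continuum and diagonalize, whereas here all diagonalization must be funneled into the choice of finitely many splitting values at each level of a countably described tree. The MCG case is substantially harder than the MED case because cofinitariness constrains every non-trivial word in the generators, not just pairs of elements, so the bookkeeping must juggle infinitely many word patterns in parallel while preserving all previously secured fixed points and maintaining the uniform growth that makes $S$ compact. The Horowitz--Shelah Borel constructions of Theorem~\ref{borel.mcg.medf} supply the blueprint, but the closed and finitely branching refinement demands a substantially more rigid level-by-level control.
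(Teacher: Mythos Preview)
The paper does not prove this theorem: it is a survey, and the statement is given with citations (for the closed MED family) and as an announcement (for the cofinitary group), with no argument supplied here. So there is no ``paper's own proof'' to compare against; I can only assess your sketch on its merits.

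Your overall shape---take a finitely branching tree $T$ and let the closed object be $[T]$, enforcing pairwise eventual difference (resp.\ cofinitariness of words) by disjoint successor sets, and compactness by bounding successor values---is indeed the architecture of the known constructions. But the maximality mechanism you describe has a genuine gap. You propose a countable bookkeeping over pairs $(t,k)\in\omega^{<\omega}\times\omega$: when $(t,k)$ comes up at level~$n$ and $t\notin T$ has maximal $T$-initial-segment $t'$, you place $k$ into the successor set of some $s''\supseteq t'$ at level~$n$. This does not catch an arbitrary $g\notin[T]$. To catch $g$ you would need, for infinitely many $n$, that the pair processed at level $n$ is $(t,g(n))$ with $t$ an initial segment of $g$; but the enumeration is fixed in advance and $g$ is arbitrary, so there is no reason this ever happens (e.g.\ take $g(n)$ larger than every value your enumeration commits at level $n$). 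More fundamentally, distinct branches of $[T]$ are pairwise eventually different, so each branch can agree infinitely often with at most one $E_0$-class of functions; hence the assignment $g\mapsto(\text{branch catching }g)$ must be a continuum-sized map \emph{determined by $g$ itself}, not something a fixed countable bookkeeping can arrange. The actual constructions replace your $(t,k)$-enumeration by a coding scheme: the values of $g$ are used to navigate $T$ and single out a specific branch which is then forced, by the design of the successor sets, to meet $g$ infinitely often.

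The MCG sketch inherits the same defect in its maximality argument, and the ``kill every candidate fixed point of every word'' step is underspecified: with a compact generating set you have continuum-many generators and hence uncountably many words at each length, so ``every word committed so far'' is not a finite list you can diagonalize against level by level. Here too the published arguments rely on coding rather than enumeration.
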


\medskip

So far, we have investigated whether the Axiom of choice is needed to prove the existence of a $G$-maximal discrete set for a given hypergraph $G$.
At least three other types of interesting questions can be asked:
\begin{question}
Given a hypergraph $G$, does the existence of a maximal $G$-discrete set imply a \emph{fragment of the Axiom of Choice} (such as $\textsf{\textup{AC}}_\omega$) in $\ZF$?
\end{question}
Negative answers for such a question are, of course, independence results.
Examples for research addressing this type of question can be found in a series of research articles by Beriashvili, Castiblanco, Kanovei, Schindler, Wu, and Yu \cite{schindler1, schindler2,schindler3}. 
\begin{question}
Given a hypergraph $G$, does the existence of some (or some infinite, or even uncountable) maximal $G$-discrete set imply the existence of an irregular set, such as a non-measurable set, a set without the Baire property etc.?
\end{question}
As we have seen for many hypergraphs $G$ the existence of a maximal $G$-discrete set implies there are non-measurable sets and sets without the Baire property. 
It is shown in \cite{pnas,higher-ramsey} that if there is an infinite $\fin^\alpha$-mad familiy, there is a set which behaves irregularly in a Ramsey theoretic sense.
On the other hand, Horowitz and Shelah \cite{horowitz-shelah-graphs} show that the non-existence of a mad family is consistent with a broad assumption of universal regularity, namely measurability with respect to $\omega^\omega$-bounding forcing notions.
\begin{question}
Given two hypergraphs $G$, $G'$ does the existence of a maximal $G$-discrete set imply, in $\ZF$, the existence of a maximal $G'$-discrete set?
\end{question}
For instance, one can show in $\ZF$ that if there is a Hamel basis, then there is a Vitali set.

%
%
%

\medskip

\subsection{Mad families and their higher dimensional relatives}

We shall now generalize the definition of mad family and construct a class of graphs with complexity cofinal in the Borel hierarchy.
The generalization of mad families is achieved by allowing an arbitrary ideal $\mathcal I$ on $\omega$ instead of
the ideal $\fin$ of finite sets. 
\begin{definition}
Let $\mathcal I$ be an ideal on a countable set $S$. An \emph{$\mathcal I$-almost disjoint} set is a discrete set in the hypergraph $G=([S]^\omega,E)$ where $A \mathbin{E} B \iff A\cap B\notin \mathcal I$. An \emph{$\mathcal I$-mad (maximal almost disjoint) family} is a maximal discrete set in $G$.
\end{definition}
The following questions are open.\footnote{A previous version of \cite{karen} claimed to have answered the first question, but alas, a mistake was found (by the authors themselves).}
An $F_\sigma$ ideal is simply an ideal which is $F_\sigma$ as a subset of ${}^\omega 2$. 
For the other notions used in the question, see, e.g., \cite{farah}.
\begin{question}
Does there exist an analytic (equivalently, Borel) $\mathcal I$-mad family, where $\mathcal I$ is some $F_\sigma$ ideal?
What about if (instead, or in addition) $\mathcal I$ is assumed to be an ideal of the form $\operatorname{Exh}(\phi)$, for some lower continuous submeasure $\phi$ on $\omega$?
\end{question}
Ideals coming from lower continuous submeasures lie very low in the Borel hierarchy. 
To obtain more complex ideals, we can use Fubini sums.
\begin{definition}~
\begin{enumerate}
\item Suppose $\mathcal I$ is an ideal on $\omega$ and for each $n$, $\mathcal I_n$ is an ideal on a countable set $S_n$ (which we call its \emph{underlying set}).
Then $\bigoplus_{\mathcal I} \mathcal I_n$, the \emph{Fubini sum of $\la \mathcal I_n \setdef n\in \omega\ra$ over $\mathcal I$} is the ideal on $\bigsqcup_n S_n = \bigcup\{n\}\times S_n$ given by
\[
X \in \bigoplus_{\mathcal I} \mathcal I_n \iff \{n\in\omega \setdef X(n)\notin \mathcal I_n\}\in\mathcal I
\]
where we write $X(n)$ for $\{s\in S_n\setdef (n,s)\in X\}$, the ``$n$th vertical'' of $X$.
\item If $\mathcal I_n=\mathcal J$ for each $n\in\omega$ we write
\[
\mathcal I \otimes \mathcal J=\bigoplus_{\mathcal I} \mathcal I_n.
\] 
Moreover, for $n\in\omega$ we write
$\mathcal I^n$ for the $n$-fold $\otimes$-product of $\mathcal I$ with itself:
\[
\mathcal I^n = \underbrace{\mathcal I \otimes \hdots \otimes \mathcal I}_\text{$n$ times}
\]
\item Write $\mathfrak F$ for the smallest collection of ideals such that $\fin\in\mathfrak F$ and if $\mathcal I_n \in \mathfrak F$ for each $n\in \omega$ then $\bigoplus_\fin \mathcal{I}_n \in \mathfrak F$. 

\end{enumerate}
\end{definition}
The sequence of ideals $\fin^n$ for $n\in\omega$ is readily seen to be increasing in Borel complexity. 
Obviously the ideals in $\mathfrak F$ extend this sequence into the transfinite.
We shall later explore the structure of $\mathfrak F$ in a more detail.
At present, let us point out that the methods in \cite{karen} show the following:
\begin{theorem}
For no $\mathcal I \in \mathfrak F$ does there exist an infinite analytic $\mathcal I$-mad family.
\end{theorem}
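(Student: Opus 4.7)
The plan is an induction on the construction of $\mathfrak{F}$. The base case $\mathcal{I} = \fin$ is Mathias's classical theorem that no infinite analytic mad family exists. For the inductive step, write $\mathcal{I} = \bigoplus_\fin \mathcal{I}_n$ with each $\mathcal{I}_n \in \mathfrak{F}$, and assume the statement holds for every $\mathcal{I}_n$. Suppose for contradiction that $\mathcal{A}$ is an infinite analytic $\mathcal{I}$-mad family on $\bigsqcup_n S_n$; the aim is to extract, from a single vertical slice, an infinite analytic $\mathcal{I}_n$-mad family, contradicting the inductive hypothesis.

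For each $n$, define
\[
\mathcal{A}^*_n = \{A(n) : A \in \mathcal{A},\ A(n) \notin \mathcal{I}_n\} \subseteq \powerset(S_n),
\]
where $A(n) = \{s : (n,s) \in A\}$. This is analytic as the continuous image of $\mathcal{A}$ under projection. $\mathcal{I}_n$-maximality is immediate: any $\mathcal{I}_n$-positive $X \subseteq S_n$ produces an $\mathcal{I}$-positive set $\{n\} \times X$, and $\mathcal{I}$-maximality of $\mathcal{A}$ then yields $A \in \mathcal{A}$ with $A(n) \cap X \notin \mathcal{I}_n$. The obstruction is $\mathcal{I}_n$-almost disjointness: for distinct $A, B \in \mathcal{A}$ one only knows that $\{n : A(n) \cap B(n) \notin \mathcal{I}_n\}$ is \emph{finite}, so at a single fixed $n$ there may be many pairs $(A,B)$ with $A(n) \cap B(n) \notin \mathcal{I}_n$.

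To overcome this, I would pick a countably infinite subfamily $\{A_k\}_{k \in \omega} \subseteq \mathcal{A}$ and perform a fusion-style diagonalization: enumerate pairs $(k,\ell)$ and at each stage record the finite bad set $F_{k,\ell} = \{n : A_k(n) \cap A_\ell(n) \notin \mathcal{I}_n\}$, then simultaneously thin the index set $K \subseteq \omega$ and choose an infinite $N \subseteq \omega$ so that the vertical family $\{A_k(n) : k \in K,\ A_k(n) \notin \mathcal{I}_n\}$ is genuinely $\mathcal{I}_n$-almost disjoint for every $n \in N$. Since each $A_k \notin \mathcal{I}$ has infinitely many $\mathcal{I}_n$-positive verticals, a pigeonhole argument forces some $n \in N$ at which this vertical family is infinite; enlarging (within $\mathcal{A}^*_n$) to a maximal $\mathcal{I}_n$-almost disjoint analytic family then contradicts the inductive hypothesis.

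The hardest step is executing the fusion cleanly enough that \emph{both} the surviving index set $K$ and the surviving vertical set $N$ remain infinite, \emph{and} the resulting family remains analytic after the restriction. The combinatorial bookkeeping of the finite bad sets $F_{k,\ell}$ as $(k,\ell)$ varies is what the methods of \cite{karen} are designed to handle, typically via a Ramsey-type scheme organizing the bad sets into a suitable tree; I expect the honest proof to follow that template, with the delicate point being to synchronize the pair-by-pair diagonalization on $K$ with the choice of a common safe vertical $n$ for all pairs in the final subfamily.
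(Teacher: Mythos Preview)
The paper does not actually prove this theorem; it only records that ``the methods in \cite{karen} show'' it, and then states a strictly stronger result whose hypothesis---uniformization of binary relations on $[\omega]^\omega$ over a non-empty Ellentuck open set---makes clear that the intended argument is Ramsey-theoretic (a Mathias-style analysis adapted to the ideals in $\mathfrak F$), not an induction via vertical slices. So there is no in-paper proof to match, but your outline is \emph{not} the approach being invoked, and it contains a real gap.

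The gap is in your last move: ``enlarging (within $\mathcal A^*_n$) to a maximal $\mathcal I_n$-almost disjoint analytic family.'' After your diagonalization you hold, at the chosen vertical $n$, only a \emph{countable} infinite $\mathcal I_n$-almost disjoint family $\{A_k(n):k\in K\}$. This family is certainly analytic, but it is not maximal; and there is no mechanism to extend it to a \emph{maximal} $\mathcal I_n$-almost disjoint family while remaining analytic. The family $\mathcal A^*_n$ does have the covering property you describe, but it is not $\mathcal I_n$-almost disjoint, and selecting an analytic maximal almost disjoint subfamily of it extending your countable family is exactly as hard as the problem you are trying to solve. The difficulty is structural: for a \emph{fixed} $n$ there may be uncountably many pairs $A,B\in\mathcal A$ with $A(n)\cap B(n)\notin\mathcal I_n$, so no diagonalization over a countable subfamily can repair almost disjointness for the whole analytic family $\mathcal A^*_n$. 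In short, the induction you propose trades analytic maximality for almost disjointness on a slice, and there is no way to recover both simultaneously. The methods of \cite{karen} and \cite{higher-ramsey} avoid this by working globally with Ramsey/Ellentuck uniformization rather than by reducing to a single column.
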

In fact the methods of \cite{higher-ramsey} show the following stronger theorem:
\begin{theorem}
Assume $\ZF+\DC$ and suppose $\Gamma$ is a pointclass closed under taking continuous preimages and intersections with Borel sets and such that every total binary relation on $[\omega]^\omega$ in $\Gamma$ can be uniformized on a non-empty Ellentuck open set by a continuous function. 
Let $\mathcal I \in \mathfrak F$. Then there is no infinite $\mathcal I$-mad family in $\Gamma$.
\end{theorem}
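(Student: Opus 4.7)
The plan is to prove the theorem by transfinite induction on the rank of $\mathcal I$ in $\mathfrak F$, where $\fin$ has rank $0$ and $\bigoplus_\fin \mathcal I_n$ has rank one greater than the supremum of the ranks of the $\mathcal I_n$. The hypothesis on $\Gamma$ is essentially a combinatorial form of the Mathias property for analytic sets, so the plan is to mimic T\"ornquist's proof of the non-existence of an infinite analytic mad family, adapted both to the higher Fubini levels and to the more general pointclass $\Gamma$.

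For the base case $\mathcal I = \fin$, given a putative infinite $\fin$-mad family $\mathcal A \in \Gamma$, I would first fix a pairwise disjoint sequence $A_0, A_1, \ldots \in \mathcal A$ (by enumerating $\omega$-many distinct members and refining). Any $Y \in [\omega]^\omega$ meeting each $A_n$ in at most finitely many points is almost disjoint from every $A_n$, so by maximality of $\mathcal A$ the relation $R(X,A)$ saying \emph{$A \in \mathcal A \setminus \{A_n : n\in\omega\}$ and $|X\cap A| = \omega$} is total on such $Y$'s and lies in $\Gamma$ by the stated closure properties. The Ellentuck-uniformization hypothesis then yields a continuous $f$ on some Ellentuck basic open set $[s,H]$ uniformizing $R$. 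A standard fusion/diagonalization argument along $f$ produces an $X^\ast \in [s,H]^\omega$ whose $f$-value cannot actually catch $X^\ast$ in infinitely many points, contradicting totality of the uniformized relation.

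For the inductive step $\mathcal I = \bigoplus_\fin \mathcal I_n$, suppose $\mathcal A \in \Gamma$ is an infinite $\mathcal I$-mad family on $S = \bigsqcup_n S_n$. For each $n$ consider the vertical family $\mathcal A_n = \{A(n) : A \in \mathcal A,\ A(n) \in \mathcal I_n^+\}$, which is still in $\Gamma$ by the closure assumptions. There are two cases. If for some $n$ the family $\mathcal A_n$ is infinite and $\mathcal I_n$-maximal on $S_n$, the inductive hypothesis applied to $\mathcal I_n$ gives a contradiction. Otherwise, for each $n$ one wants to select (uniformly in $n$, and \emph{definably}) an $\mathcal I_n$-positive $Y_n \subseteq S_n$ which is $\mathcal I_n$-almost disjoint from every vertical $A(n)$; packaging this as a total $\Gamma$-relation on $[\omega]^\omega$ and applying the uniformization hypothesis produces continuous choice functions for the $Y_n$ on an Ellentuck open set. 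The set $Y = \bigsqcup_n Y_n$ is then $\mathcal I$-positive, and running a base-case–style Mathias fusion \emph{across} the Fubini index $n$ on $Y$ yields, via a second application of uniformization, a set witnessing the $\mathcal I$-non-maximality of $\mathcal A$.

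The main obstacle will be the bookkeeping in the inductive step: the relations whose uniformizations we require are \emph{a priori} relations on the larger space $[S]^\omega$ or on sequences of choices $\langle Y_n : n \in \omega \rangle$, and we must re-encode each one as a total $\Gamma$-relation on $[\omega]^\omega$ in such a way that the resulting continuous uniformizer can still be used to feed the fusion across the $\fin$-coordinate. This requires using only closure of $\Gamma$ under continuous preimage and intersection with Borel sets, and choosing the encodings so that Ramsey/Mathias-style diagonalization on the $\fin$-coordinate applies uniformly over the choices made within each vertical. Once this packaging is arranged, the argument should follow the template already developed in \cite{karen} and \cite{higher-ramsey}.
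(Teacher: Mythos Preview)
The paper itself does not prove this theorem; it merely states that ``the methods of \cite{higher-ramsey} show'' it and moves on. So there is no in-paper argument to compare against, only the template of \cite{higher-ramsey,pnas}, which proceeds by exhibiting from an $\mathcal I$-mad family a set that is Ramsey-theoretically irregular, rather than by a column-by-column induction.

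Your inductive step has a genuine gap. The dichotomy you set up does not close. Write $\mathcal A_n=\{A(n):A\in\mathcal A,\ A(n)\notin\mathcal I_n\}$. Two problems arise:
\begin{itemize}
\item[\textbf{(a)}] $\mathcal A_n$ need not be $\mathcal I_n$-almost disjoint. If $A,B\in\mathcal A$ then $A\cap B\in\mathcal I$ only says $(A\cap B)(m)\in\mathcal I_m$ for \emph{all but finitely many} $m$; for your fixed $n$ it may well happen that $A(n)\cap B(n)\notin\mathcal I_n$. So ``$\mathcal A_n$ is an infinite $\mathcal I_n$-mad family'' is not the only way the verticals can cover $S_n$, and the failure of Case~1 does \emph{not} give you, for each $n$, an $\mathcal I_n$-positive $Y_n$ which is $\mathcal I_n$-almost disjoint from every $A(n)$. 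Indeed the verticals can cover $S_n$ (so no such $Y_n$ exists) while badly failing almost disjointness (so the inductive hypothesis does not apply either).
\item[\textbf{(b)}] $\mathcal A_n$ is a continuous \emph{image} of a set in $\Gamma$, not a continuous preimage; the closure properties you have (preimages and Borel intersections) do not put $\mathcal A_n$ into $\Gamma$, so even when Case~1 holds you cannot invoke the inductive hypothesis on $\mathcal A_n$ as stated.
\end{itemize}
Note also that if your Case~2 really produced $Y_n$ as described for every $n$, then $Y=\bigsqcup_n Y_n$ would already witness non-maximality of $\mathcal A$ outright (since $(Y\cap A)(n)=Y_n\cap A(n)\in\mathcal I_n$ for all $n$), so the ``second application of uniformization'' you sketch would be superfluous; this is another sign that the case split is not doing the work you intend. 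The actual argument in \cite{higher-ramsey} does not reduce to the columns individually but runs a single fusion along the tree presentation of $\mathcal I\in\mathfrak F$, keeping the $\Gamma$-relation on $[\omega]^\omega$ throughout so that only preimage/Borel closure is needed.
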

As a corollary to this theorem, there is no infinite $\mathcal I$-mad family in Solovay's model for $\mathcal I \in \mathfrak F$.
Moreover, under the Axiom of Determinacy there is no such family in $\eL(\reals)$, and under the Axiom of Projective Determinacy, there is no projective (Dedekind-infinite) such family.

\section{Combinatorial cardinal characteristics and spectra}

Let a hypergraph $G=(X,H)$ be given.
\begin{question}
What are the possible sizes of $G$-maximal discrete sets? How are they related to other combinatorial cardinal characteristics of the continuum?
\end{question}
Let us make the following definition.
\begin{definition}
Write 
\begin{gather*}
\operatorname{Spec}(G) = \{\lvert D \rvert : \text{$D\subseteq X$, $D$ is a $G$-maximal discrete set}\},\\
\mathfrak a_G = \min\big(\operatorname{Spec}(G)\setminus\omega\big).
\end{gather*}
Conventionally, one writes $\mathfrak a$, $\mathfrak a_\mathrm{e}$ and $\mathfrak{a}_\mathrm{g}$ for the least sizes of a mad family, maximal eventually different family, and a maximal cofinitary group, respectively.
We also write
\[
\mathfrak a_{\mathcal I} = \min\{\lvert \mathcal A\rvert : \mathcal A \subseteq \powerset(\omega), \text{ $\mathcal A$ is an infinite $\mathcal I$-mad family}\}.
\]
\end{definition}
\begin{remark}
Note that for some graphs, it may be more appropriate or more interesting to define $\mathfrak a_G$ as $\min\big(\operatorname{Spec}(G)\setminus\omega_1\big).$
\end{remark}

\subsection{Higher dimensional mad families as families on trees}

What can be said about $\mathfrak a_{\mathcal I}$ for $\mathcal I \in \mathfrak F$?
To address this question, we must first find a rough description of the ideals in $\mathfrak F$.
To this end, write $\mathfrak T_0$ for the collection of non-empty subtrees $T$ of ${}^{<\omega}\omega$ such that
\begin{itemize}
\item $T$ is well-founded
\item For each $t \in T$, the set $\{n\in\omega\setdef t\conc n\in T\}$ is either infinite or equal to $\emptyset$.
\end{itemize} 
Consider the equivalence relation $\sim$ on $\mathfrak T_0$ given by $T \sim T'$ if there is some bijection $f\colon T \to T'$ which preserves the length of sequences, and let $\mathfrak T=\{[T] \setdef T \in \mathfrak T_0\}$, where $[T]$ denotes the equivalence class with respect to $\sim$ of $T$.

Define for each $T\in\mathfrak T_0$ an ideal  $I(T)$ on $T$ by induction on the ordinal rank of well-founded trees: 
If $T$ is the unique tree in $\mathfrak T_0$ with rank $1$ let $\mathcal I(T)$ be the finite ideal on $T$, 
\[
\mathcal I(T) =  [T]^{<\omega}
\]
which is obviously the same as $\fin$ up to a bijection of the underlying sets.
For $T\in\mathfrak T_0$ of rank greater than $1$ let $\mathcal I(T)$ 
be given by
\[
X \in \mathcal I(T) \iff \{n\in \omega \setdef X_{\lfloor n \rfloor} \notin \mathcal I(T_{\lfloor n \rfloor})\}\in\fin
\]
where for $X \subseteq {}^{<\omega}\omega$ we use the notation
\[
X_{\lfloor n \rfloor} =\{t'\in {}^{<\omega}\omega\setdef \la n\ra \conc t' \in X\}.
\]
Note that obviously, up to a bijection of the underlying sets of ideals,
\[
\mathcal I(T) = \bigoplus_{\fin} \mathcal I(T_{\lfloor n \rfloor}).
\]
As we shall see, this gives us a useful presentation of the ideals in $\mathfrak F$:
\begin{fact}
$\mathfrak F = \{\mathcal I(T)\setdef T \in \mathfrak T_0\}$ and if $T\sim T'$ then $\mathcal I(T)=\mathcal I(T')$ where we consider ideals to be identical if they are the same up to a bijection of their underlying sets.
\end{fact}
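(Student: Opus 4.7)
The plan is to prove the fact via two inductions, after first establishing that $\mathcal I(T)$ is well-defined modulo bijection of underlying sets, i.e., the second part of the fact: $T \sim T'$ implies $\mathcal I(T) = \mathcal I(T')$ up to bijection. I would induct on the rank of $T$. In the rank-$1$ case, both ideals are the finite ideal on a countably infinite set, so any length-preserving bijection $f \colon T \to T'$ transports $\mathcal I(T)$ to $\mathcal I(T')$. For the inductive step, such an $f$ must map level-$1$ vertices to level-$1$ vertices, inducing a bijection $\sigma \colon \{n : \la n\ra \in T\} \to \{m : \la m\ra \in T'\}$ and, for each $n$, length-preserving bijections $T_{\lfloor n\rfloor} \sim T'_{\lfloor \sigma(n)\rfloor}$. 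The induction hypothesis then yields bijections between $\mathcal I(T_{\lfloor n\rfloor})$ and $\mathcal I(T'_{\lfloor \sigma(n)\rfloor})$ which, together with $\sigma$, assemble into a single bijection witnessing $\mathcal I(T) = \mathcal I(T')$ up to bijection.

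Next I would show $\{\mathcal I(T) : T \in \mathfrak T_0\} \subseteq \mathfrak F$ by induction on the rank of $T$. The base case is immediate since $\mathcal I(T) = \fin$ (up to bijection) and $\fin \in \mathfrak F$ by definition. For the inductive step, the recursive equation $\mathcal I(T) = \bigoplus_\fin \mathcal I(T_{\lfloor n\rfloor})$ together with the induction hypothesis places $\mathcal I(T)$ in $\mathfrak F$, by closure of $\mathfrak F$ under $\fin$-indexed Fubini sums.

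For the converse $\mathfrak F \subseteq \{\mathcal I(T) : T \in \mathfrak T_0\}$ I would induct along the generation of $\mathfrak F$. The ideal $\fin$ equals $\mathcal I(T_0)$, where $T_0 = \{\emptyset\} \cup \{\la n\ra : n \in \omega\}$. For the inductive step, given $\mathcal I_n = \mathcal I(T_n)$ up to bijection for each $n \in \omega$, define
\[
T = \{\emptyset\} \cup \bigcup_{n \in \omega} \{\la n\ra \conc t : t \in T_n\},
\]
and verify that $T \in \mathfrak T_0$: well-foundedness is inherited from the $T_n$, infinite splitting at $\emptyset$ is automatic, and the splitting condition at every other node is inherited from the $T_n$. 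The obvious length-preserving map yields $T_{\lfloor n\rfloor} \sim T_n$, and so by the preceding paragraph $\mathcal I(T_{\lfloor n\rfloor}) = \mathcal I_n$ up to bijection; this gives $\mathcal I(T) = \bigoplus_\fin \mathcal I_n$ up to bijection, as required.

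The main obstacle is careful bookkeeping with the ``up to bijection of underlying sets'' convention. The Fubini sum $\bigoplus_\fin \mathcal I_n$ lives naturally on $\bigsqcup_n S_n$ where $S_n$ is the underlying set of $\mathcal I_n$, whereas $\mathcal I(T)$ lives on $T$, which contains an additional root element $\emptyset$. One needs to verify that adjoining or suppressing this single root element does not affect the ideal up to bijection (it always belongs to the ideal), and to patch the piecewise bijections produced by the induction hypothesis into a single global bijection witnessing each of the equalities above.
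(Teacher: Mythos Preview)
Your proposal is correct: the three inductions (on tree rank for the $\sim$-invariance and for the inclusion $\{\mathcal I(T):T\in\mathfrak T_0\}\subseteq\mathfrak F$, and along the inductive generation of $\mathfrak F$ for the reverse inclusion) are exactly the natural way to verify this, and your remarks about handling the root $\emptyset$ and patching bijections address the only genuine bookkeeping issues. One small simplification: in your third step the tree you build satisfies $T_{\lfloor n\rfloor}=T_n$ on the nose, not merely up to $\sim$, so you need not invoke the $\sim$-invariance there.

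As for comparison with the paper: there is nothing to compare against. The paper records this statement as a \emph{Fact} and gives no proof, treating it as a routine observation left to the reader. Your argument supplies precisely the routine verification that the paper omits.
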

Thus, with this slight abuse of notation we could write $\mathfrak T$ for $\mathfrak T_0 / \sim$ and define $\mathcal I([T])=\mathcal I(T)$ (as we identify ideals which agree up to renaming elements of the underlying space).

\medskip

But one should be aware here that $\mathcal I(T)$ and $\mathcal I(T')$ can be very similar also when $T \not\sim T'$.
To give a simple example, we have the following useful fact:
\begin{fact}\label{f.restrict}
Suppose $T, T' \in \mathfrak T_0$ and $T \setminus T' \in \mathcal I(T)$. Then 
\[
\mathcal I(T')=\{X \cap T'\setdef X\in T\}, 
\]
i.e., $\mathcal I(T')$ is $\mathcal I(T)$ restricted to $T'$, which since $T'$ agrees with $T$ up to a set in $\mathcal I(T)$, for our purposes can be identified with $\mathcal I(T)$.
\end{fact}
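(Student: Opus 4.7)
The natural approach is induction on the rank of $T$ in $\mathfrak T_0$, mirroring the recursive definition of $\mathcal I(T)$; I read the hypothesis as including $T'\subseteq T$. For the base case, a rank-one $T\in\mathfrak T_0$ consists of a root with infinitely many terminal children and $\mathcal I(T)=[T]^{<\omega}$; the hypothesis says $T\setminus T'$ is finite, so $T'$ still has its root with infinitely many children, hence also has rank one and $\mathcal I(T')=[T']^{<\omega}$, from which both inclusions of the conclusion are immediate.

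For the inductive step, write $A=\{n:\langle n\rangle\in T\}$ and $A'=\{n:\langle n\rangle\in T'\}$, so $A'\subseteq A$. A quick parallel induction on rank gives the side fact that $T\notin\mathcal I(T)$ for every $T\in\mathfrak T_0$; hence for $n\in A\setminus A'$ one has $(T\setminus T')_{\lfloor n\rfloor}=T_{\lfloor n\rfloor}\notin\mathcal I(T_{\lfloor n\rfloor})$, so the assumption $T\setminus T'\in\mathcal I(T)$ forces $A\setminus A'$ to be finite and moreover $T_{\lfloor n\rfloor}\setminus T'_{\lfloor n\rfloor}\in\mathcal I(T_{\lfloor n\rfloor})$ for cofinitely many $n\in A'$; so the IH applies to $(T_{\lfloor n\rfloor},T'_{\lfloor n\rfloor})$ at cofinitely many coordinates.

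For the inclusion $\supseteq$, given $X\in\mathcal I(T)$, outside a finite exceptional subset of $A'$ the IH yields $(X\cap T')_{\lfloor n\rfloor}=X_{\lfloor n\rfloor}\cap T'_{\lfloor n\rfloor}\in\mathcal I(T'_{\lfloor n\rfloor})$, hence $X\cap T'\in\mathcal I(T')$. For $\subseteq$, given $Y\in\mathcal I(T')$ take $X=Y$ viewed as a subset of $T$; the IH, in the form $\mathcal I(T'_{\lfloor n\rfloor})=\{Z\cap T'_{\lfloor n\rfloor}:Z\in\mathcal I(T_{\lfloor n\rfloor})\}$, provides $Z_n\in\mathcal I(T_{\lfloor n\rfloor})$ with $Y_{\lfloor n\rfloor}=Z_n\cap T'_{\lfloor n\rfloor}\subseteq Z_n$, and downward closure of the ideal gives $Y_{\lfloor n\rfloor}\in\mathcal I(T_{\lfloor n\rfloor})$; together with finiteness of $A\setminus A'$ (and $Y_{\lfloor n\rfloor}=\emptyset$ for $n\notin A'$), this gives $Y\in\mathcal I(T)$, and clearly $Y=Y\cap T'$. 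The only real obstacle is careful bookkeeping of the three finite exceptional sets that appear at each inductive step---coordinates where the IH does not yet apply, coordinates where some $X$-section escapes $\mathcal I(T_{\lfloor n\rfloor})$, and $A\setminus A'$ itself---each of which is controlled directly by the fact that the Fubini sum in the definition of $\mathcal I(T)$ is taken over $\fin$.
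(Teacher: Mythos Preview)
Your argument is correct. The paper does not supply a proof of this Fact---it is stated without justification---so there is nothing to compare your approach against; the induction on the rank of $T$, together with the auxiliary observation that $T\notin\mathcal I(T)$ for every $T\in\mathfrak T_0$, is exactly the natural route and would serve as the omitted verification. Two small remarks: you are right to read the hypothesis as including $T'\subseteq T$ (this is needed for the conclusion to make sense), and you have silently and correctly repaired the typo in the displayed formula, where ``$X\in T$'' should read ``$X\in\mathcal I(T)$''.
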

In the situation described in the above fact, let us say that \emph{$\mathcal I(T)$ almost equals $\mathcal I(T')$}.

\medskip

It is to be expected that further relationships between trees $T$ and $T'$ correspond to relationships between $\mathcal I(T)$ and $\mathcal I(T')$. For example, we can now show that a certain type of embedding between trees has consequences for the spectrum of mad families of the corresponding ideals.

We define an ordering $\prec$ on $\mathfrak T$ as follows: Let $[S] \prec [T]$ if $S'\subseteq T$ for some $S'\in [S]$.
We can now easily show the following:
\begin{lemma}\label{l.tree.embedding}
Let $T,T' \in \mathfrak T_0$ be given with $[T]\prec [T']$.
Then from any $\mathcal I(T)$-mad family $\mathcal A$ we can construct a $\mathcal I(T')$-mad family of size $\lvert\mathcal A\rvert$.
In particular, 
\[
[T]\prec [T'] \Rightarrow \mathfrak a_{\mathcal I(T')}\leq\mathfrak a_{\mathcal I(T)}
\]
\end{lemma}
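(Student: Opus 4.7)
The plan is to choose a representative $T_0 \in [T]$ with $T_0 \subseteq T'$, and transfer $\mathcal{A}$ via the witnessing length-preserving bijection; since $\mathcal{I}(T)$ and $\mathcal{I}(T_0)$ agree up to this bijection, we may assume $T \subseteq T'$ throughout. Let $\pi \colon T' \to T$ be the retraction sending each $t \in T'$ to the longest initial segment of $t$ lying in $T$; this is well-defined because $T$ is $\iseq$-downward closed. For each $A \in \mathcal{A}$ set $\bar A := \pi^{-1}(A) \subseteq T'$, let $R := T' \setminus \bigcup_{A \in \mathcal{A}} \bar A$, and define
\[
\mathcal{A}' := \{\bar A : A \in \mathcal{A}\} \cup \{R\},
\]
with $R$ dropped in case $R \in \mathcal{I}(T')$.

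Before verifying that $\mathcal{A}'$ is $\mathcal{I}(T')$-mad, I record two preparatory observations, both proved by a straightforward induction on the rank of trees in $\mathfrak{T}_0$ directly from the Fubini-sum definition. First, the set of non-leaves of any $T \in \mathfrak{T}_0$ lies in $\mathcal{I}(T)$; using this to modify each $A$, we may assume $A \subseteq \mathrm{leaves}(T)$ for every $A \in \mathcal{A}$. Second, and this is the key technical step, for $X \subseteq \mathrm{leaves}(T)$ one has the \emph{shadow equivalence}
\[
X \in \mathcal{I}(T) \iff \pi^{-1}(X) \in \mathcal{I}(T'),
\]
which is proved by matching the Fubini sums on the two trees slice by slice: for $n$ with $\langle n\rangle \in T$ the subtrees $T_{\lfloor n\rfloor} \subseteq T'_{\lfloor n\rfloor}$ fall within the inductive hypothesis, while for $n$ with $\langle n\rangle \in T'\setminus T$ the $n$th slice contributes nothing because $\pi$-images of nodes in $T'_{\lfloor n\rfloor}$ lie above $\mathrm{leaves}(T)$.

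Granting these, almost disjointness of $\mathcal{A}'$ is immediate: for distinct $A, A' \in \mathcal{A}$ we have $\bar A \cap \bar{A'} = \pi^{-1}(A \cap A')$ with $A \cap A' \in \mathcal{I}(T)$, hence $\bar A \cap \bar{A'} \in \mathcal{I}(T')$ by the shadow equivalence; and $\bar A \cap R = \emptyset$ by construction. Nontriviality $\bar A \notin \mathcal{I}(T')$ is again the shadow equivalence applied to $A$ itself. For maximality, suppose $B \subseteq T'$ satisfies $B \notin \mathcal{I}(T')$ and split $B$ along the partition $T' = \bigcup_A \bar A \sqcup R$. If $B \cap R \notin \mathcal{I}(T')$ then $R$ catches $B$; otherwise $B \cap \bigcup_A \bar A \notin \mathcal{I}(T')$. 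A second Fubini induction, applied fibrewise, shows that the leaf shadow
\[
\phi(B) := \{s \in \mathrm{leaves}(T) : B \cap \pi^{-1}(\{s\}) \text{ is not ``fibre-small''}\}
\]
lies outside $\mathcal{I}(T)$, so $\mathcal{I}(T)$-madness of $\mathcal{A}$ yields some $A \in \mathcal{A}$ with $A \cap \phi(B) \notin \mathcal{I}(T)$, and unravelling the fibres converts this back to $\bar A \cap B \notin \mathcal{I}(T')$.

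The main technical obstacle is precisely this fibrewise comparison: the fibres $\pi^{-1}(\{s\})$ over leaves $s$ of $T$ that are non-leaves of $T'$ can themselves carry nontrivial Fubini structure, and the notion of ``fibre-small'' used to define $\phi$ must be chosen compatibly with $\mathcal{I}(T')$ so that the inductive step closes; this is where one has to track both hierarchies in parallel rather than appeal to a single induction. Once it is in place, $|\mathcal{A}'| \leq |\mathcal{A}| + 1 = |\mathcal{A}|$ since $\mathcal{A}$ is infinite, and taking $\mathcal{A}$ to realise $\mathfrak{a}_{\mathcal{I}(T)}$ yields the stated inequality $\mathfrak{a}_{\mathcal{I}(T')} \leq \mathfrak{a}_{\mathcal{I}(T)}$.
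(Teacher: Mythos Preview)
Your overall strategy matches the paper's: assume $T \subseteq T'$ and push $\mathcal A$ forward by an ``upward closure'' operation. Indeed, once you restrict each $A$ to $\mathrm{leaves}(T)$, your $\bar A = \pi^{-1}(A)$ coincides exactly with the paper's map $A^* = \{t' \in T' : (\exists t \in A)\; t \subseteq t'\}$, and your shadow equivalence amounts to the paper's properties that $A \in \mathcal I(T) \Rightarrow A^* \in \mathcal I(T')$ and $A' \notin \mathcal I(T') \Rightarrow (A')_* \notin \mathcal I(T)$. One small point: your remainder set $R$ is superfluous, since $T \setminus \bigcup \mathcal A \in \mathcal I(T)$ by madness, and pushing this forward gives $R \in \mathcal I(T')$ always.

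Where you genuinely diverge from the paper, and where you leave a gap, is the maximality argument. You introduce an unspecified ``fibre-small'' notion to define $\phi(B)$, explicitly label this ``the main technical obstacle'', and do not resolve it. The paper sidesteps the whole issue with one short move: given $B \subseteq T'$ with $B \notin \mathcal I(T')$, first pass to a subset $A' \subseteq B$ in which every node has infinitely many children in $A'$ (such $A'$ with $A' \notin \mathcal I(T')$ always exists). Then take the simple downward projection
\[
(A')_* = \{t \in T : (\exists t' \in A')\; t \subseteq t'\},
\]
which lies outside $\mathcal I(T)$; madness of $\mathcal A$ gives $A \in \mathcal A$ with $(A')_* \cap A \notin \mathcal I(T)$, and the infinite-branching hypothesis on $A'$ then converts this back to $A' \cap A^* \notin \mathcal I(T')$. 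No fibrewise analysis, no reduction to leaves, and no parallel induction on two trees is needed. Your approach can presumably be completed, but the paper's trick of shrinking $B$ to an infinitely-branching set is both simpler and eliminates precisely the obstacle you identify.
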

\begin{proof}
Recall that $T$ and $T'$ are the underlying sets of $\mathcal I(T)$ and $\mathcal I(T')$, respectively.

We first define two maps
\begin{gather*}
A \mapsto A^*,
\powerset(T) \to \powerset(T')
\intertext{and}
A \mapsto A_*,
\powerset(T') \to \powerset(T)
\end{gather*}
as follows: 
Given $A \subseteq T$, let 
\[
A^* = \{t'\in T'\setdef (\exists t \in A)\; t\subseteq t'\}.
\]
and given $A\subseteq T'$, 
let
\[
A_*=\{t\in T\setdef (\exists t' \in A)\; t \subseteq t'\}.
\]
Note that these maps have the following properties:
\begin{enumerate}[(i)]
\item\label{i.first} $A^* \cap B^* = (A\cap B)^*$ for every $A,B \subseteq T$,
\item If $A\in \mathcal I(T)$ then $A^* \in \mathcal I(T')$,
\item If $A' \subseteq T'$ with $A' \notin \mathcal I(T')$ then $(A')_* \notin \mathcal I(T)$,
\item\label{i.last} Suppose $A \subseteq T$, and suppose $A'\subseteq T'$ satisfies 
$\{n\in\omega \setdef t\conc \la n\ra\}\notin\fin$ for each $t\in A'$. 
Then  
$(A')_* \cap B \notin \mathcal I(T) \Rightarrow A' \cap B^* \notin \mathcal I(T')$.
\end{enumerate}
Now suppose we are given an $\mathcal I(T)$-mad family $\mathcal A$. 
Let
\[
\mathcal A'=\{A^* \setdef A\in \mathcal A\}.
\]
With the above properties \ref{i.first}--\ref{i.last} it is easy to see that $\mathcal A'$ is an $\mathcal I(T')$-mad family.
For instance, to see maximality let $A'\subseteq T'$ such that $A'\notin\mathcal I(T')$ be given.
By replacing $A'$ with a subset if necessarily, we can assume that for any $t'\in A'$ it holds that
$\{n\in\omega \setdef t'\conc \la n\ra\}$ is infinite. 
Find $A \in \mathcal A$ such that $(A')_* \cap A \notin\mathcal I(T)$.
Now use \ref{i.last}.
\end{proof}

Lemma~\ref{l.tree.embedding} has the following perhaps somewhat surprising corollary. 
Given $f\colon \omega \to \omega$, let us momentarily write
$\mathcal I(f) = \bigoplus_\fin \fin^{f(n)}$. Recall that for $f,g\in{}^\omega\omega$, $f \leq^* g$ means that $\{n\in\omega\setdef f(n)>g(n)\}\in\fin$.

\newpage

\begin{corollary}
Suppose $f,g\in{}^\omega\omega$ and $f \leq^* g$. Then $\mathfrak a_{\mathcal I(g)} \leq \mathfrak a_{\mathcal I(f)}$.
\end{corollary}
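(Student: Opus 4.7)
The plan is to reduce the claim to Lemma~\ref{l.tree.embedding}. For each $k\in\omega$ choose a canonical representative $U_k \in \mathfrak T_0$ with $\mathcal I(U_k)=\fin^k$; concretely one can take $U_k=\omega^{\leq k}$ as a subtree of $\omega^{<\omega}$. Build $T_f, T_g \in \mathfrak T_0$ by
\[
T_f = \{\emptyset\} \cup \{\langle n\rangle\conc s \setdef n\in\omega,\ s\in U_{f(n)}\},
\qquad
T_g = \{\emptyset\} \cup \{\langle n\rangle\conc s \setdef n\in\omega,\ s\in U_{g(n)}\}.
\]
Using the Fubini-sum formula displayed just before Fact~\ref{f.restrict}, one obtains $\mathcal I(T_f)=\bigoplus_\fin \fin^{f(n)}=\mathcal I(f)$ and likewise $\mathcal I(T_g)=\mathcal I(g)$; hence $\mathfrak a_{\mathcal I(f)} = \mathfrak a_{\mathcal I(T_f)}$ and $\mathfrak a_{\mathcal I(g)} = \mathfrak a_{\mathcal I(T_g)}$.

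Fix $N\in\omega$ with $f(n)\leq g(n)$ for all $n\geq N$, and delete from $T_f$ every node of the form $\langle m\rangle \conc s$ with $m<N$ (including the $\langle m\rangle$ themselves) to obtain a sub-tree $\tilde T_f$; the root still has infinitely many immediate successors $\{\langle n\rangle\setdef n\geq N\}$, so $\tilde T_f\in\mathfrak T_0$. The verticals $(T_f\setminus\tilde T_f)_{\lfloor n\rfloor}$ are nonempty only for $n<N$, so the set of indices where the vertical fails to lie in the appropriate component ideal is contained in $\{0,\hdots,N-1\}$ and hence finite; this means $T_f\setminus\tilde T_f \in \mathcal I(T_f)$. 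Applying Fact~\ref{f.restrict} yields that $\mathcal I(\tilde T_f)$ almost equals $\mathcal I(T_f)$, so in particular $\mathfrak a_{\mathcal I(\tilde T_f)}=\mathfrak a_{\mathcal I(T_f)}$.

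For $n\geq N$ the inclusion $\omega^{\leq f(n)}\subseteq \omega^{\leq g(n)}$ is length-preserving, and assembling these inclusions via $\emptyset\mapsto\emptyset$ and $\langle n\rangle\conc s \mapsto \langle n\rangle\conc s$ (for $n\geq N$, $s\in U_{f(n)}$) realizes $\tilde T_f$ as a subset of $T_g$ through a length-preserving bijection, i.e.\ $[\tilde T_f] \prec [T_g]$. Lemma~\ref{l.tree.embedding} now gives $\mathfrak a_{\mathcal I(g)} = \mathfrak a_{\mathcal I(T_g)} \leq \mathfrak a_{\mathcal I(\tilde T_f)} = \mathfrak a_{\mathcal I(f)}$, as required. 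The only delicate point in the argument is the bookkeeping around the finitely many $n<N$ on which $f$ exceeds $g$, and Fact~\ref{f.restrict} is precisely the tool that absorbs this finite discrepancy.
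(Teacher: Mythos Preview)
Your argument is correct and follows essentially the same route as the paper's one-line proof: the paper simply asserts that one can find trees $T_f\subseteq T_g$ with $\mathcal I(T_f)$ almost equal to $\mathcal I(f)$ and $\mathcal I(T_g)$ almost equal to $\mathcal I(g)$, then invokes Fact~\ref{f.restrict} and Lemma~\ref{l.tree.embedding}; your $\tilde T_f$ and $T_g$ are exactly such a pair, and you have spelled out the details the paper omits. One cosmetic remark: since $\tilde T_f$ is literally a subset of $T_g$, the ``length-preserving bijection'' you mention is just the identity, so you can simply say $\tilde T_f\subseteq T_g$ witnesses $[\tilde T_f]\prec[T_g]$.
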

\begin{proof}
This is obvious by finding trees $T_f\subseteq T_g$ such that $\mathcal I(T_f)$ almost equals $\mathcal I(f)$ and $\mathcal I(T_g)$ almost equals $\mathcal I(g)$ in the sense of Fact~\ref{f.restrict}.
\end{proof}

Our presentation of $\mathfrak F$ as $\{\mathcal I(T)\setdef T\in \mathfrak T\}$ also shows that there is potentially more than one way to extend $\la \fin^n \setdef n\in\omega\ra$ linearly into the transfinite. 
For example, fix any sequence 
\[
\vec \sigma = \la \bar \sigma^\alpha \setdef 0<\alpha < \omega_1\ra
\]
 where each 
 $\bar\sigma^\alpha$ is in turn a sequence from $\alpha\setminus1$ of length $\omega$,
\[
\bar \sigma^\alpha = \la\sigma^\alpha_n \setdef n\in\omega\ra,
\] 
with either supremum $\alpha$ if $\alpha$ is a limit, and so that $\bar \sigma^\alpha$ is the constant sequence with value $\alpha-1$ otherwise. 
Then define 
$T^{\vec \sigma}_\alpha$ by induction on $\alpha >0$ as follows: 
\begin{gather*}
T^{\vec \sigma}_1 = {}^1\omega,\\
 T^{\vec \sigma}_\alpha = \{ \la n \ra \conc t \setdef n \in \omega, t \in T^{\vec \sigma}_{\sigma^\alpha_n}\}.
 \end{gather*}
Then $\mathcal I(T^{\vec \sigma}_\alpha)$ is exactly $\fin^\alpha$ as it was defined in \cite{karen}. 
The sequence of ideals obtained in this way extends $\la \fin^n \setdef n\in \omega\ra$. 
One should really write $\fin^{\vec \sigma,\alpha}$; 
the question what the relationship of these ideals for different choices of $\vec \sigma$ is a special case of the analogue question for $\mathcal I(T)$.

Obviously, the most all-encompassing choice for $\gamma$ would be to let $\bar\sigma^\alpha$ be an enumeration of $\alpha$ for each limit $\alpha$. 
By the fact that $T \sim T'$ implies that  $\mathcal I(T)$ and $\mathcal I(T')$ differ only trivially, in this case it is obvious that the particular choice of sequence of enumerations is of no consequence.
This is, as far as I can tell, how \cite{raghavan-steprans} defines $\fin^\alpha$.

\medskip

Let us write 
\[
\mathfrak a^{\vec \sigma}_\alpha = \mathfrak a_{\mathcal I(\fin^{\vec \sigma,\alpha})}.
\] 
and allow us to write $\mathfrak a_\alpha$ and hide the dependence on $\vec \sigma$ when it is distracting.
Lemma~\ref{l.tree.embedding} now immediately gives us:
\begin{corollary}\label{c.a}
If $\alpha < \beta < \omega_1$, $\mathfrak a_\beta \leq \mathfrak a_\alpha$, regardless of the (implicit) choice of the sequence $\vec \sigma$.
\end{corollary}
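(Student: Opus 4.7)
The plan is to reduce the corollary to Lemma~\ref{l.tree.embedding}: it suffices to show $[T^{\vec\sigma}_\alpha] \prec [T^{\vec\sigma}_\beta]$ for all $0 < \alpha < \beta < \omega_1$. I would first observe that $\prec$ is transitive: if $S_1' \sim S_1$ witnesses $[S_1] \prec [S_2]$ and $S_2'' \sim S_2$ witnesses $[S_2] \prec [S_3]$, then the image of $S_1'$ under the bijection $S_2 \to S_2''$ is a length-preserving copy of $S_1$ sitting inside $S_3$.

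The core of the argument is a transfinite induction on $\beta$. The key observation is that the very definition of $T^{\vec\sigma}_\beta$ exhibits a copy of $T^{\vec\sigma}_{\sigma^\beta_n}$ inside it for every $n \in \omega$, namely the subset $\{\langle n\rangle \conc t : t \in T^{\vec\sigma}_{\sigma^\beta_n}\}$, which is in canonical bijection with $T^{\vec\sigma}_{\sigma^\beta_n}$; this gives $[T^{\vec\sigma}_{\sigma^\beta_n}] \prec [T^{\vec\sigma}_\beta]$. If $\beta = \gamma + 1$ is a successor, then $\sigma^\beta_n = \gamma$ for every $n$, so this immediately yields $[T^{\vec\sigma}_\gamma] \prec [T^{\vec\sigma}_\beta]$, handling $\alpha = \gamma$; any $\alpha < \gamma$ is then covered by combining the induction hypothesis at $\gamma$ with transitivity. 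If $\beta$ is a limit, then $\bar\sigma^\beta$ has supremum $\beta$, so we may choose $n$ with $\alpha \leq \sigma^\beta_n$; then $[T^{\vec\sigma}_\alpha] \prec [T^{\vec\sigma}_{\sigma^\beta_n}] \prec [T^{\vec\sigma}_\beta]$, the first step being trivial if $\alpha = \sigma^\beta_n$ and furnished by the induction hypothesis otherwise.

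The one delicate point is verifying that the shifted embedding $t \mapsto \langle n\rangle \conc t$ really qualifies as a witness of $\sim$. This is essentially bookkeeping, given the paper's convention that ideals $\mathcal I(T)$ and $\mathcal I(T')$ are identified up to a bijection of their underlying sets: the cosmetic shift in how nodes are presented as finite sequences should not matter. I would make this precise by verifying directly, from the recursive definition of $\mathcal I(\cdot)$, that $\mathcal I(\{\langle n\rangle \conc t : t \in T^{\vec\sigma}_{\sigma^\beta_n}\})$ agrees with $\mathcal I(T^{\vec\sigma}_{\sigma^\beta_n})$ modulo the obvious renaming. Once this point is settled, the transfinite induction proceeds without further difficulty, and since the argument nowhere referred to the particular sequence $\vec \sigma$ beyond its defining conditions, the conclusion holds independently of the choice of $\vec\sigma$.
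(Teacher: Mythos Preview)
Your overall strategy---reduce to Lemma~\ref{l.tree.embedding} by showing $[T^{\vec\sigma}_\alpha]\prec[T^{\vec\sigma}_\beta]$---is exactly what the paper intends. But the key step has a genuine gap. The relation $\sim$ is defined via a bijection that \emph{preserves the length of sequences}, and your map $t\mapsto\langle n\rangle\conc t$ increases length by one. So $\{\langle n\rangle\conc t: t\in T^{\vec\sigma}_{\sigma^\beta_n}\}$ is \emph{not} in the $\sim$-class of $T^{\vec\sigma}_{\sigma^\beta_n}$ and therefore does not witness $[T^{\vec\sigma}_{\sigma^\beta_n}]\prec[T^{\vec\sigma}_\beta]$. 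Worse, this shifted set (even after closing under initial segments) is not in $\mathfrak T_0$ at all: the root would have only the single child $\langle n\rangle$, violating the infinite-or-empty branching condition. Your proposed repair---checking that the associated ideals agree up to renaming---does not help: the ideal $\mathcal I(\cdot)$ is only defined on $\mathfrak T_0$, and in any case equality of ideals does not by itself yield the relation $\prec$ needed to invoke the lemma.

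The fix is to build a genuine length-preserving embedding by induction on $\beta$, working level by level rather than shifting. Send the root to the root; then for each immediate subtree $T^{\vec\sigma}_{\sigma^\alpha_m}$ of $T^{\vec\sigma}_\alpha$, choose injectively some $n_m$ with $\sigma^\alpha_m\leq\sigma^\beta_{n_m}$ and, by the induction hypothesis, embed $T^{\vec\sigma}_{\sigma^\alpha_m}$ length-preservingly into $T^{\vec\sigma}_{\sigma^\beta_{n_m}}$. Such $n_m$ exist: if $\beta=\gamma+1$ then every $\sigma^\beta_n=\gamma\geq\alpha>\sigma^\alpha_m$, and if $\beta$ is a limit then $\{n:\sigma^\beta_n\geq\alpha\}$ is infinite since $\sup_n\sigma^\beta_n=\beta$. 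Gluing these embeddings gives $S\sim T^{\vec\sigma}_\alpha$ with $S\subseteq T^{\vec\sigma}_\beta$ and $S\in\mathfrak T_0$, which is what $\prec$ actually requires.
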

Using similar ideas, one can in fact show:
\begin{fact}
The value of $\mathfrak a_\alpha$ depends only on $\alpha$ and not on the choice of $\vec \sigma$.
\end{fact}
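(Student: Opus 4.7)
The plan is to reduce the Fact to a single embedding claim: for any $0 < \alpha \leq \beta < \omega_1$ and any parameter sequences $\vec\sigma, \vec\tau$, one has $[T^{\vec\sigma}_\alpha] \prec [T^{\vec\tau}_\beta]$. Granted this, applying the claim with $\alpha = \beta$ once in each order of $\vec\sigma, \vec\tau$, Lemma~\ref{l.tree.embedding} delivers both $\mathfrak a^{\vec\tau}_\alpha \leq \mathfrak a^{\vec\sigma}_\alpha$ and $\mathfrak a^{\vec\sigma}_\alpha \leq \mathfrak a^{\vec\tau}_\alpha$, whence the Fact.

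The Claim is proved by transfinite induction on $\beta$. The base case $\beta = 1$ forces $\alpha = 1$ and is trivial. For the inductive step, the strategy is: send the root $\emptyset$ to $\emptyset$; match each root-child $\la m\ra$ of $T^{\vec\sigma}_\alpha$ to a \emph{distinct} root-child $\la n_m\ra$ of $T^{\vec\tau}_\beta$ in such a way that the rank of the subtree of $T^{\vec\tau}_\beta$ below $\la n_m\ra$ dominates the rank $r_m < \alpha$ of the subtree of $T^{\vec\sigma}_\alpha$ below $\la m\ra$; and then apply the induction hypothesis to each pair of subtrees, extending the map by the replacement $\la m\ra \leftrightarrow \la n_m\ra$. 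The successor step $\beta = \gamma + 1$ is easy: every target subtree has rank $\gamma$, and $\alpha \leq \beta$ forces $r_m \leq \gamma$ (by case analysis on whether $\alpha$ is limit, successor, or $1$), so one simply takes $n_m = m$.

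The main combinatorial content---and the only place where the hypothesis $\sup \bar\tau^\beta = \beta$ is really used---is the limit step. Here the key observation is that for an $\omega$-sequence $\bar\tau^\beta$ of ordinals below the limit $\beta$ with $\sup = \beta$, the set $\{n \in \omega : \tau^\beta_n \geq \eta\}$ is infinite for every $\eta < \beta$, for otherwise the sequence would be bounded below $\beta$. With this in hand one picks the $n_m$ greedily: at stage $m$, only the finitely many earlier choices $n_0, \hdots, n_{m-1}$ must be avoided within the infinite candidate pool $\{n : \tau^\beta_n \geq r_m\}$. The induction hypothesis applied at the pair $(r_m, \tau^\beta_{n_m})$---available since $\tau^\beta_{n_m} < \beta$ and $r_m \leq \tau^\beta_{n_m}$---then produces the required length-preserving embedding inside each subtree, and prepending $\la n_m\ra$ in place of $\la m\ra$ assembles these into a subtree of $T^{\vec\tau}_\beta$ equivalent under $\sim$ to $T^{\vec\sigma}_\alpha$. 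I expect this bookkeeping in the limit step---matching infinitely many source children to pairwise disjoint target children of sufficient rank---to be the main, though essentially routine, technical point.
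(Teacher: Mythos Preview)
The paper does not actually supply a proof of this Fact; it only writes ``Using similar ideas, one can in fact show'' it, referring back to Lemma~\ref{l.tree.embedding} and its corollaries. Your argument is correct and is precisely the proof the paper is gesturing at: you establish $[T^{\vec\sigma}_\alpha]\prec[T^{\vec\tau}_\alpha]$ (and symmetrically) by an easy transfinite induction matching root-children to root-children of sufficient subtree index, then apply Lemma~\ref{l.tree.embedding} in both directions to conclude $\mathfrak a^{\vec\sigma}_\alpha=\mathfrak a^{\vec\tau}_\alpha$.
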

We are therefore justified in dropping the superscipt $\vec \sigma$ altogether and writing $\mathfrak a_\alpha$.

The following question seems to be hard:
\begin{question}\label{q.a_2}
Is it consistent with $\ZFC$ that $\mathfrak a_2 < \mathfrak a$? How about $\mathfrak a_\omega < \mathfrak a$?
\end{question}
Some light has been shed on this by Raghavan and Steprans \cite{raghavan-steprans} from which article we mention the following two results.

\begin{theorem}\label{t.steprans}
For each ideal of the form $\mathcal I = \bigoplus_\fin \mathcal I_n$, $\mathfrak b \leq \mathfrak a_{\mathcal I}$.
\end{theorem}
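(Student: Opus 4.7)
The plan is to adapt the classical argument that $\mathfrak{b}\leq\mathfrak{a}$ to the Fubini-sum setting $\mathcal{I}=\bigoplus_\fin \mathcal{I}_n$, working fiber by fiber. Write $S=\bigsqcup_n S_n$ for the underlying set of $\mathcal{I}$, and for $X\subseteq S$ write $X(n)$ for its $n$th vertical; recall that $X\in\mathcal{I}$ iff $\{m:X(m)\notin \mathcal{I}_m\}$ is finite. I will assume for contradiction that $\mathcal{A}$ is an infinite $\mathcal{I}$-mad family with $|\mathcal{A}|<\mathfrak{b}$ and construct $X\subseteq S$ with $X\notin\mathcal{I}$ but $X\cap A\in\mathcal{I}$ for every $A\in\mathcal{A}$.

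Fix a countable subfamily $\{A_n:n\in\omega\}\subseteq\mathcal{A}$. Since each $A_n\notin\mathcal{I}$, the support $\supp(A_n)=\{m:A_n(m)\notin\mathcal{I}_m\}$ is infinite. For each $A\in\mathcal{A}$, associate a function $f_A\in{}^\omega\omega$ that encodes how soon the fiberwise intersections with the $A_n$ become $\mathcal{I}_m$-small: if $A\neq A_n$, let $f_A(n)$ be the least $M$ such that $A(m)\cap A_n(m)\in\mathcal{I}_m$ for all $m\geq M$, which is well-defined because $A\cap A_n\in\mathcal{I}$; and set $f_A(n)=0$ if $A=A_n$. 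Since $|\{f_A:A\in\mathcal{A}\}|<\mathfrak{b}$, fix $g\in{}^\omega\omega$ with $f_A\leq^* g$ for every $A\in\mathcal{A}$.

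Next, pick a strictly increasing sequence $\la m_n:n\in\omega\ra$ with $m_n\geq g(n)$ and $m_n\in\supp(A_n)$ (possible since each $\supp(A_n)$ is unbounded in $\omega$), and define $X$ by $X(m_n)=A_n(m_n)$ and $X(m)=\emptyset$ otherwise. Then $X\notin\mathcal{I}$ because $X(m_n)\notin\mathcal{I}_{m_n}$ for every $n$. To check that $X\cap A\in\mathcal{I}$ for each $A\in\mathcal{A}$, note that $(X\cap A)(m_k)\subseteq A_k(m_k)\cap A(m_k)$, which lies in $\mathcal{I}_{m_k}$ whenever $k\geq N_A$ (the threshold beyond which $g\geq f_A$) and $A\neq A_k$; at most one further $k$ is bad (the unique $k$, if any, with $A=A_k$). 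Hence $(X\cap A)(m)\in\mathcal{I}_m$ for all but finitely many $m$, giving $X\cap A\in\mathcal{I}$, contrary to the $\mathcal{I}$-maximality of $\mathcal{A}$.

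The main conceptual step, and really the only nonroutine one, is the definition of $f_A$: it compresses $\mathcal{I}$-almost-disjointness of $A$ with a countable subfamily into a single integer-valued function encoding the height above which fiberwise intersections sit inside $\mathcal{I}_m$. After this reduction, $\mathfrak{b}$ enters exactly as in the classical $\fin$-case, and the fiberwise construction of $X$ is essentially forced by the choice of $\la m_n\ra$. A minor technical point to verify carefully is that infinitely many of the $m_n$ genuinely lie in distinct fibers witnessing $X\notin\mathcal{I}$, which is immediate from the strict increase of $\la m_n\ra$.
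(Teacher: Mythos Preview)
The paper does not supply its own proof of this theorem; it is quoted from Raghavan--Stepr\={a}ns \cite{raghavan-steprans} with no argument given, so there is nothing in the paper to compare your proposal against.

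Your argument is correct and is the natural fiberwise adaptation of the classical proof that $\mathfrak b\leq\mathfrak a$. The key idea---encoding, for each $A\in\mathcal A$, the almost-disjointness from the fixed countable subfamily $\{A_n\}$ by the single function $f_A$ recording the height above which all vertical intersections $A(m)\cap A_n(m)$ lie in $\mathcal I_m$---is exactly what makes the bounding number enter. Two minor points worth making explicit in a final write-up: first, you tacitly use that the $A_n$ are pairwise distinct so that at most one index $k$ satisfies $A=A_k$; second, you should note that $X$ is indeed an infinite subset of $S$ (each fiber $X(m_n)=A_n(m_n)$ is nonempty since it is $\mathcal I_{m_n}$-positive, and there are infinitely many such fibers), so that $X$ is a legitimate vertex witnessing failure of maximality. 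Neither point is problematic.
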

Recall here that $\mathfrak b$, the bounding number, is defined as
\[
\mathfrak b = \min\{ \lvert \mathcal F\rvert : \mathcal F \subseteq \omega^\omega, (\forall g \in \omega^\omega) (\exists f \in \mathcal F)\; f \mathbin{\not\leq^*} g\}.
\]
The following was shown by Raghavan and Steprans in the same article.
\begin{theorem}
For every $\alpha\in\omega_1$, $\min(\mathfrak s, \mathfrak a)\leq \mathfrak a_\alpha \leq \mathfrak a$.
\end{theorem}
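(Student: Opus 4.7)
The upper bound $\mathfrak{a}_\alpha \leq \mathfrak{a}$ is immediate from Corollary~\ref{c.a} together with the observation that $\mathfrak{a}_1 = \mathfrak{a}$ (since $T^{\vec\sigma}_1 = {}^1\omega$ yields $\mathcal{I}(T^{\vec\sigma}_1) = \fin$). For the lower bound, my plan is to prove $\min(\mathfrak{s},\mathfrak{a}) \leq \mathfrak{a}_\alpha$ by transfinite induction on $\alpha \in \omega_1 \setminus \{0\}$; since by the Fact preceding Corollary~\ref{c.a} the value of $\mathfrak{a}_\alpha$ does not depend on $\vec\sigma$, I may fix any convenient choice. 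The base case $\alpha = 1$ is immediate because $\mathfrak{a}_1 = \mathfrak{a} \geq \min(\mathfrak{s},\mathfrak{a})$.

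For the inductive step, I assume $\alpha > 1$ and the claim for all $\beta$ with $1 \leq \beta < \alpha$, and suppose for contradiction that $\mathcal{A}$ is an infinite $\mathcal{I}(T^{\vec\sigma}_\alpha)$-mad family of size $\kappa < \min(\mathfrak{s},\mathfrak{a})$. The strategy is to refute maximality by building $X \notin \mathcal{I}(T^{\vec\sigma}_\alpha)$ with $X \cap A \in \mathcal{I}(T^{\vec\sigma}_\alpha)$ for every $A \in \mathcal{A}$. For each $A \in \mathcal{A}$, set
\[
S_A = \{n \in \omega : A_{\lfloor n \rfloor} \notin \mathcal{I}(T^{\vec\sigma}_{\sigma^\alpha_n})\};
\]
since $\mathcal{I}(T^{\vec\sigma}_\alpha) = \bigoplus_\fin \mathcal{I}(T^{\vec\sigma}_{\sigma^\alpha_n})$, one has $A \notin \mathcal{I}(T^{\vec\sigma}_\alpha)$ iff $S_A$ is infinite. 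Because $|\{S_A : A \in \mathcal{A}\}| \leq \kappa < \mathfrak{s}$, this family is not splitting, so I fix $Y \in [\omega]^\omega$ with, for each $A \in \mathcal{A}$, either $Y \subseteq^* S_A$ or $Y \cap S_A \in \fin$, and split $\mathcal{A} = \mathcal{A}_0 \sqcup \mathcal{A}_1$ accordingly.

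For each $n \in Y$, the subtree below $\la n\ra$ is isomorphic to $T^{\vec\sigma}_{\sigma^\alpha_n}$ with $\sigma^\alpha_n < \alpha$, and by induction $\min(\mathfrak{s},\mathfrak{a}) \leq \mathfrak{a}_{\sigma^\alpha_n}$. I then choose $X^n \notin \mathcal{I}(T^{\vec\sigma}_{\sigma^\alpha_n})$ such that $X^n \cap A_{\lfloor n \rfloor} \in \mathcal{I}(T^{\vec\sigma}_{\sigma^\alpha_n})$ for every $A \in \mathcal{A}_0$, and define
\[
X = \bigcup_{n \in Y} \{\la n\ra \conc t : t \in X^n\}.
\]
Then $S_X \supseteq Y$ is infinite, so $X \notin \mathcal{I}(T^{\vec\sigma}_\alpha)$; for $A \in \mathcal{A}_0$ one has $S_{X \cap A} = \emptyset$ by construction, while for $A \in \mathcal{A}_1$ one has $S_{X \cap A} \subseteq Y \cap S_A \in \fin$, so $X \cap A \in \mathcal{I}(T^{\vec\sigma}_\alpha)$ in either case, contradicting the maximality of $\mathcal{A}$.

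The main obstacle I expect is the choice of the sets $X^n$: a cardinality argument alone shows that $\{A_{\lfloor n \rfloor} : A \in \mathcal{A}_0\}$ is too small to be an \emph{infinite} $\mathcal{I}(T^{\vec\sigma}_{\sigma^\alpha_n})$-mad family, but a priori it could be finite and maximal, since the $\mathcal{I}(T^{\vec\sigma}_\alpha)$-maximality of $\mathcal{A}$ forces $\{A_{\lfloor n\rfloor} : A \in \mathcal{A}\}$ to cover $T^{\vec\sigma}_{\sigma^\alpha_n}$ modulo $\mathcal{I}(T^{\vec\sigma}_{\sigma^\alpha_n})$ for cofinitely many $n$. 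The simplest way around this is to weaken the demand on $X^n$, asking only that for each fixed $A \in \mathcal{A}_0$ the bad set $\{n \in Y : X^n \cap A_{\lfloor n\rfloor} \notin \mathcal{I}(T^{\vec\sigma}_{\sigma^\alpha_n})\}$ be finite, and to diagonalize across $n \in Y$ using the freedom provided by $\mathfrak{s}$ (possibly thinning $Y$ once more) to secure this. Once this is achieved, the bookkeeping separates cleanly into the regimes $\mathcal{A}_0$, $\mathcal{A}_1$ as above, with $\mathfrak{a}$ entering only at the base case and $\mathfrak{s}$ doing the work at every inductive step.
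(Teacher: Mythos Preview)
The paper itself does not give a proof of this theorem; it is attributed to Raghavan and Stepr\=ans and cited as \cite{raghavan-steprans}. So there is no ``paper's own proof'' to compare against, and your proposal must stand on its own.

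Your upper bound is fine and is exactly what the surrounding text of the paper supplies (Corollary~\ref{c.a} plus $\mathfrak a_1=\mathfrak a$). The lower bound, however, has a genuine gap which you yourself flag but do not close. The problem is the existence of the sets $X^n$. For a fixed $n\in Y$ the family $\{A_{\lfloor n\rfloor}:A\in\mathcal A_0\}$ need not be $\mathcal I(T^{\vec\sigma}_{\sigma^\alpha_n})$-almost disjoint (almost disjointness of $A,B\in\mathcal A$ only gives this for cofinitely many $n$, and there are $\kappa$ many pairs), so the inductive hypothesis $\mathfrak a_{\sigma^\alpha_n}>\kappa$---a statement about \emph{infinite almost disjoint} families---does not apply. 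Worse, even a single $A_{\lfloor n\rfloor}$ that happens to be co-small in $T^{\vec\sigma}_{\sigma^\alpha_n}$ blocks the existence of any positive $X^n$ disjoint from it, so nothing about the cardinality of the restricted family helps at that column.

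Your proposed patch, to ``diagonalize across $n\in Y$ using the freedom provided by $\mathfrak s$'', is where the argument stalls. What you would need at each column is closer to a reaping-type statement (finding a positive set either almost inside or almost disjoint from each $A_{\lfloor n\rfloor}$), and the relevant invariant there is $\mathfrak r$, not $\mathfrak s$; you have no bound $\kappa<\mathfrak r$. Nor does a second application of non-splitting across the index set $Y$ obviously produce the columnwise choices you need, since the obstruction lives \emph{inside} each column rather than in the pattern of columns. The Raghavan--Stepr\=ans argument handles this differently; as written, your sketch does not yet contain the missing idea.
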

Recall here that $\mathfrak s$ is the splitting number (see, e.g., \cite{handbook} for a definition).
Thus, for a positive answer to Question~\ref{q.a_2} one must construct a model also of $\mathfrak b\leq\mathfrak s < \mathfrak a$.

It is well-known (and also follows from Theorem~\ref{t.steprans} and Corollary~\ref{c.a}) that $\ZF$ proves $\mathfrak b \leq \mathfrak a$. It is also known that $\mathfrak s$ is independent from each of $\mathfrak b$ and $\mathfrak a$ individually. 
Some constellations of all three are also known to be consistent (see, e.g., \cite{fischer}). The constructions tend to be difficult.

\medskip

\subsection{The spectrum of maximal cofinitary groups and Zhang's forcing}

The cardinal invariant $\mathfrak a_{\mathrm{g}}$, i.e., the least size of a maximal cofinitary group, and more generally $\operatorname{Spec}(G_{\mathrm{mcg}})$, the possible sizes of maximal cofinitary groups, have drawn much interest. 
Very little is known about the relationship of $\mathfrak a_{\mathrm{g}}$ to other cardinal characteristics; see the introduction of \cite{good-projective} for a partial survey of results in this direction.

In connection with questions about $\operatorname{Spec}(G_{\mathrm{mcg}})$ the following forcing is very useful.
Suppose we have a cofinitary group $\mathcal G$ and we want to force to add a generic $\sigma^G \in S_\infty$ such that
the group generated by $\mathcal G\cup\{\sigma^G\}$ is also cofinitary.
If $\mathcal G$ is countable, such $\sigma^G$ is in fact added by Cohen forcing.
For any $\mathcal G$ as above, such $\sigma^G$ can be added by a forcing $\Q_{\mathcal G}$ invented by Zhang \cite{zhang-maximal};  this forcing reduces to Cohen forcing should $\mathcal G$ be countable.
Given the similarity to Cohen forcing, it is unsurprising that $\sigma^G$ is not eventually different from any permutation in the ground model with the exception of permutations in $\mathcal G$.
Thus, an iteration whose length has uncountable cofinality and which forces with $\Q_{\mathcal G}$ at a set of stages which is unbounded in the length of the iteration adds a maximal cofinitary group which extends $\mathcal G$.

\medskip

We now proceed to give an exposition of this forcing, simplifying the definition and the main arguments.

Let us fix some cofinitary group $\mathcal G$ and a letter $X$ to stand for the new generic permutation added by our forcing.
We shall need to talk about $\mathcal G * \operatorname{\mathbb{F}}(X)$, the free products of $\mathcal G$ with the free group with single generator $X$.

Clearly every element of $\mathcal G * \operatorname{\mathbb{F}}(X)$ can be written uniquely as
\begin{equation}\label{e.words}
w = g^w_{l} X^{j^w_{l-1}} \hdots g^w_0
\end{equation}
with $l=l(w) \in \omega$ and where we demand that $g^w_i \in \mathcal G\setminus\{1_{\mathcal G}\}$ for $0\leq i\leq l$ and $j^w_i\in \Int\setminus\{0\}$ for $0\leq i <l$.
We will also write $g^w_L$ for $g^w_{l(w)}$ and $g^w_R$ for $g^w_0$ (the subscripts stand for ``left-most'' and ``right-most'', of course). When $l(w)>0$, we allow either of these (but no other $g^w_i$) to equal $\emptyset$, the empty word, identified with $1_{\mathcal G}$. We also allow $l(w)=0$, in which case $w=\emptyset$, i.e., $1_{\mathcal G}$.

Writing $W_{\mathcal G,X}$ for the set of words as in \eqref{e.words} equipped with an obvious operation of con\-cat\-en\-ate-and-reduce, we obtain a useful presentation of the group $\mathcal G * \operatorname{\mathbb{F}}(X)$.



\medskip

Let us write $I_\infty$ for the set of finite injective partial functions from $\omega$ to $\omega$. 
(With the operation of concatenation of partial functions, it forms a monoid with identity.)
Let us momentarily fix $s \in I_\infty$.

Define a map $w \mapsto w[s]$ 
for $w \in W_{\mathcal G,X}$ by
\[
w[s]= g^w_{l} s^{j^w_{l-1}} \hdots g^w_0,\\
\]
where $l=l(w)$ and $s^j$ for $j\in\Int\setminus\{0\}$ denotes $s^{\sgn j}$ concatenated with itself $\lvert j\rvert$ times.
That is, $w[s]$ ``replaces each $X$ in $w$ by $s$ and each $X^{-1}$ by $s^{-1}$''.
Writing also $\rho_s$ for the map $w \mapsto w[s]$, clearly
\begin{gather*}
\rho_s\colon\mathcal G * \operatorname{\mathbb{F}}(X) \to I_\infty,\\
w \mapsto \rho_s(w)=w[s]
\end{gather*}
acts as a homomorphism of associative monoids and preserves taking inverses; in fact, it is the unique homomorphism of groupoids which restricts to the identity on $\mathcal G$ and sends $X$ to $s$.

Before we can define $\Q_{\mathcal G}$ we need one last piece of terminology:
Let us say that $w_0 \in W_{\mathcal G, X}$ is a proper conjugated subword of $w_1 \in W_{\mathcal G, X}$ if there exists $w \in W_{\mathcal G, X}\setminus\{\emptyset\}$ such that $w_1=w^{-1}w_0 w$.

\begin{definition}[The forcing $\Q_{\mathcal G}$]\label{d.Q.simple}~ 
\begin{enumerate}[label=(\alph*),ref=\alph*]
\item
Conditions of $\Q_{\mathcal G}$ are pairs $p=(s^p,F^p)$ where $s \in I_\infty$ and $F^p \subseteq W_{\mathcal G, X}$ is finite 
and contains only words without proper conjugated subwords.
\item
$(s^q,F^q)\leq_{\Q_{\mathcal G}} (s^p,F^p)$ if and only if $s^q \supseteq s^p$, $F^q \supseteq F^p$ and for all $w\in F^p\setminus\mathcal G$,
$\fix(w[s^q]) = \fix(w'[s^p])$. \label{d.Q.simple.order}
\end{enumerate}
\end{definition}
Transitivity is sometimes tedious to check; but not here.
\begin{lemma}
The ordering $\leq_{\Q_{\mathcal G}}$ is transitive.
\end{lemma}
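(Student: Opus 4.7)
The plan is a direct diagram chase from the definition. Suppose $r \leq_{\Q_{\mathcal G}} q$ and $q \leq_{\Q_{\mathcal G}} p$. First I would observe that $s^r \supseteq s^q \supseteq s^p$ and $F^r \supseteq F^q \supseteq F^p$ by transitivity of $\supseteq$, so the first two clauses of Definition~\ref{d.Q.simple}(\ref{d.Q.simple.order}) are automatic.

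The only clause requiring work is the fixed-point condition. Fix any $w \in F^p \setminus \mathcal G$. Since $F^p \subseteq F^q$, we have $w \in F^q \setminus \mathcal G$, and hence the hypothesis $r \leq_{\Q_{\mathcal G}} q$ gives
\[
\fix(w[s^r]) = \fix(w[s^q]).
\]
Independently, the hypothesis $q \leq_{\Q_{\mathcal G}} p$ applied to the same $w \in F^p \setminus \mathcal G$ gives
\[
\fix(w[s^q]) = \fix(w[s^p]).
\]
Chaining the two equalities yields $\fix(w[s^r]) = \fix(w[s^p])$, verifying the condition for $r \leq_{\Q_{\mathcal G}} p$.

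There is essentially no obstacle here, and this is precisely why the authors remark that transitivity is not tedious in this case: the fixed-point clause of the order is quantified over $F^p$ alone, and since $F^p \subseteq F^q$ the intermediate data $F^q$ never enters in a way that could obstruct the chase. In enriched variants of the forcing (for example, when side conditions or coding requirements are added and the constraint at a refinement step refers to objects associated with the intermediate condition), transitivity typically becomes a genuine issue; here the definition has been pared down so as to avoid any such interaction.
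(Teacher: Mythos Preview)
Your argument is correct and is exactly the unpacking of what the paper declares ``completely obvious'': the containment clauses are transitive on the nose, and since the fixed-point clause is quantified over $F^p$ (which is contained in $F^q$), the two equalities chain. There is nothing to add.
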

\begin{proof}
With the present definition of $\leq_{\Q_{\mathcal G}}$ this is completely obvious.
\end{proof}

We write any condition  $p \in \Q_{\mathcal G}$ as $(s^p, F^p)$ if we want to refer to the components of that condition.

If $G$ is $(\Ve,\Q_{\mathcal G})$-generic, we let
\[
\sigma^G = \bigcup_{p\in G} s^p.
\]
Using requirement \eqref{d.Q.simple.order} and typical genericity arguments it is easy to see that 
\[
\rho_{\sigma^G}\colon\mathcal G*\bF(X)\to \langle \mathcal G, \sigma^G \rangle
\] 
is an isomorphism of groups, and the image on the right is a cofinitary group. 
Moreover, $\langle \mathcal G, \sigma^G \rangle$ is maximal with respect to the ground model, i.e., for no $\tau\in (S_\infty\setminus\mathcal G) \cap \Ve$ is $\mathcal G\cup\{\sigma^G, \tau\}$ contained in a cofinitary group.

\medskip

Before we continue the discussion of $\Q_{\mathcal G}$, it is convenient to introduce the notion of \emph{path}, which despite being fairly intuitive should be given a precise definition. 
\begin{definition}[Paths]
Given $s\in I_\infty$ and a word $a_l\hdots a_1\in W_{\mathcal G,X}$ where each $a_i \in \mathcal G\cup\{X, X^{-1}\}$, \emph{the path of $m\in\omega$ under $(w,s)$} is the sequence $\la m_k:k\in\alpha\ra$, where $m_0=m$, and where for each $k$, writing $k=nl+i$ with $i<n$ we have
$$m_k=(a_i\cdots a_1w^{nl})[s](m),$$
and where $\alpha$ is either $\omega$, or denotes the least $k$ for which $m_{k}$ as above is not defined.
In other words, the path is the sequence given by the following evaluations:
\[
\hdots m_{k+1}\xleftarrow{\ a_i} m_{k}\hdots\stackrel{\ a_2}{\longleftarrow} m_{l+1} \stackrel{\ a_1}{\longleftarrow} m_l \stackrel{\ a_l}{\longleftarrow}  \hdots  \stackrel{\ a_2}{\longleftarrow} m_1 \stackrel{\ a_1}{\longleftarrow} m_0=m
\]
\end{definition}

\medskip

We return to our  discussion of $\Q_{\mathcal G}$.
First, let us explain why we do not allow arbitrary words with proper conjugated subwords in $F^p$:
For supposing, e.g., $g\in\mathcal G$ and $n\in\fix(g)$ the condition $(\emptyset, \{ X^{-1}gX\}) \in\Q_{\mathcal G}$ has no extension $q\in\Q_{\mathcal G}$ with $n\in \ran (s^q )$. 
More generally, if $s\in I_\infty$ and $w_0,w_1,w' \in W_{\mathcal G, X}$ are such that $n\in\fix(w'[s])$ but $w_0[s](n)\notin\dom(s)$,  
then $(s, \{ w_1Xw_0w'(w_1Xw_0)^{-1}\}) \in\Q_{\mathcal G}$ has no extension $q\in\Q_{\mathcal G}$ with $w_0[s](n)\in \dom (s^q )$.
A similar example can be found with $X^{-1}$ and $\ran(s)$. 
This obstruction was already pointed out in \cite[p.~42f.]{zhang-maximal}.

\medskip

Previously, the strategy to avoid this obstruction has been to adopt a more complicated definition of $\leq_{\Q_{\mathcal G}}$, namely as follows.

\begin{enumerate}[label={$(\ref{d.Q.simple.order}')$},ref={$\ref{d.Q.simple.order}'$}]
\item\label{d.Q.simple.order'}
$(s^q,F^q)\leq_{\Q_{\mathcal G}} (s^p,F^p)$ if and only if $s^q \supseteq s^p$, $F^q \supseteq F^p$ and for all  $w\in F^p$ and $m\in\hbox{fix}(w[s^q])$, 
there is a non-empty subword $w'$ of $w$ such that
letting $w=w_1w' w_0$ and letting $\langle \hdots m_1, m_0 \rangle$ be the $(w,s^q)$-path of $m$,
$m_k \in \fix(w'[s^p])$ where $k$ is the length of $w_0$. We describe this situation by the following picture:
\[
m \xleftarrow{\ w_1} m_{k} \xleftarrow{\ w'} m_{k}\stackrel{\ w_0}{\longleftarrow} m
\]
\end{enumerate}

With the above variant definition of $\leq_{\Q_{\mathcal G}}$ it becomes possible to allow arbitrary words in $F^p$, including words with proper conjugated subwords. But since conjugate pairs of words have the same fixed points,
it is enough to rule out infinitely many fixed points for words without proper conjugate subwords.

\medskip

The following is a much more streamlined version of the crucial lemma in \cite{zhang-maximal} (a version of this exists in \cite{good-projective}).

\medskip

To state the lemma, let us clarify what we mean by the \emph{circular shift} of a word in $w \in W_{\mathcal G,X}$:
Writing $w=w_l\cdots w_1$ in the form as in \eqref{e.words} and given a permutation $\sigma:\{1,\cdots, l\}\to\{1,\cdots,l\}$ such that
$\sigma(i)=i + k\hbox{ mod }l$ for some $k\in \omega$, we will refer to $w_{\sigma(l)}\cdots w_{\sigma(1)}$ as a circular shift of $w$. Thus, in particular, there are only finitely many circular shifts of a given word.

\begin{lemma}[Domain Extension for $\Q_{\mathcal G}$]\label{l.domain.ext}
Suppose $s\in I_\infty$, $w\in W_{\mathcal G, X}$ has no proper conjugate subwords and $n\in\omega\setminus\dom(s)$. 
Then for a co-finite set of $n'$, letting $s' = s \cup \{ (n,n')\}$, we have that $s'$ is injective and $\fix(w[s'])=\fix(w[s])$.
\end{lemma}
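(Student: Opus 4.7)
The plan is to show the set of \emph{bad} values of $n'$ (those for which $s'$ fails injectivity or $\fix(w[s'])$ strictly contains $\fix(w[s])$) is finite. Injectivity forces $n' \notin \ran(s)$, excluding a finite set; I will also exclude $n' = n$, which handles a few degenerate cases. Since $s' \supseteq s$ gives $\fix(w[s]) \subseteq \fix(w[s'])$ automatically, what remains is to prevent genuinely new fixed points. Write $w = a_L \cdots a_1$ letter-by-letter. If $m \in \fix(w[s'])$, the path $m_0 = m, m_1, \ldots, m_L = m$ either avoids the new pair entirely (in which case $m \in \fix(w[s])$), or uses it at some finite set of steps $t_1 < \cdots < t_r$, with signs $\epsilon_k \in \{+1, -1\}$ according to whether $a_{t_k} = X$ or $X^{-1}$. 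Only finitely many such patterns exist, so it suffices to show that each pattern contributes only finitely many bad $n'$.

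Fix a pattern and let $v_0, \ldots, v_r$ denote the subwords of $w$ between consecutive new-use steps (with $v_0$ the prefix and $v_r$ the suffix). Each interior transition $1 \leq k < r$ yields the equation $v_k[s](\nu_k^+) = \nu_{k+1}^-$, where $\nu_k^+$ equals $n'$ or $n$ according as $\epsilon_k$ is $+1$ or $-1$, and $\nu_{k+1}^-$ equals $n$ or $n'$ according as $\epsilon_{k+1}$ is $+1$ or $-1$. In the subcase $(\epsilon_k, \epsilon_{k+1}) \in \{(+,+),(-,-)\}$ this pins $n'$ to the single value $v_k^{-1}[s](n)$ or $v_k[s](n)$; in the subcase $(+,-)$ the equation becomes $v_k[s](n') = n'$, so $n' \in \fix(v_k[s])$. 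Here $v_k$ is a non-empty reduced subword of $w$ lying strictly between two occurrences of $X^{\pm 1}$, so $v_k$ is either a single non-identity element of $\mathcal{G}$---cofinitary, hence with finite fixed-point set---or it contains some $X^{\pm 1}$, which makes $v_k[s]$ a partial function with finite domain. Either way $\fix(v_k[s])$ is finite.

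All remaining patterns have every interior transition of type $(-,+)$, which only constrains $n$ (not $n'$) via $v_k[s](n) = n$; but two such transitions in a row would force $\epsilon_{k+1}$ to equal both $+1$ and $-1$, so $r \leq 2$. For $r = 1$ the prefix and suffix equations combine to pin $n'$ to a single value. For $r = 2$ with signs $(-, +)$ they instead combine to give $(v_0 v_2)[s](n') = n'$, i.e., $n' \in \fix((v_0 v_2)[s])$. This is where I expect the main obstacle and where the hypothesis on $w$ is essential: I must rule out $v_0 v_2$ reducing to the empty word, since otherwise the constraint would be trivial. But $v_0 v_2 = 1$ is equivalent to $v_2 = v_0^{-1}$, whence $w = v_0^{-1}(X v_1 X^{-1}) v_0$. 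If $v_0 \neq \emptyset$ this displays $X v_1 X^{-1}$ as a proper conjugated subword of $w$ via $v_0$, while if $v_0 = \emptyset$ then $w = X v_1 X^{-1}$ displays $v_1$ as a proper conjugated subword via $X^{-1}$; both contradict the hypothesis on $w$. Therefore $v_0 v_2$ is a non-trivial reduced word in $\mathcal{G} * \bF(X)$, and the same dichotomy (non-identity cofinitary element or finite domain) forces $\fix((v_0 v_2)[s])$ to be finite. Summing the finite contributions over the finitely many patterns yields a finite bad set of $n'$, as required.
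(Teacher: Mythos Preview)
Your proof is correct and follows essentially the same path-analysis as the paper: both arguments track where the new pair $(n,n')$ is used, observe that any ``mixed'' interior transition pins $n'$ to a finite set (the paper phrases this as choosing $n'$ outside $\fix(w'[s])$ and $\{w'[s]^{\pm1}(n)\}$ for $w'$ ranging over subwords of circular shifts of $w$), reduce to the case of at most two uses with signs $(-,+)$, and then invoke the no-proper-conjugate-subword hypothesis to rule out $v_0 v_2 = \emptyset$. The only organizational difference is that you bound the bad set pattern-by-pattern whereas the paper exhibits the avoidance set up front and argues by contradiction; the mathematical content is identical.
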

\begin{proof}
Let $W^*$ be the set of subwords of circular shifts of $w$ and pick $n'$ arbitrary such that
\begin{equation}
\begin{split}\label{e.domain.ext.cont}
n' \notin  &\bigcup \big\{ \fix(w'[s]) \colon w' \in W^*\setminus\{ \emptyset\}\big\},\\
n' \notin &\bigcup  \big\{ w'[s]^i (n) \colon i \in \{-1,1\}, w' \in W^* \big\},\text{ and}\\
n' \notin &\ran(s).
\end{split}
\end{equation}
By the last requirement $s'$ is injective and by the middle requirement $n' \neq n$ since $\emptyset \in W^*$. 

Assume towards a contradiction that $m_0 \in \fix (w[s']) \setminus \fix(w[s])$.
As the $(w,s)$-path of $m_0$ differs from the $(w,s')$-path, the latter must contain an application of $X$ to $n$ or of $X^{-1}$ to $n'$. Write this latter path (omitting some steps) as
\begin{equation}\label{e.path.basic}
m_0 \stackrel{w_{l+1}}{\longleftarrow} m_{k(l)+1} \stackrel{X^{j(l)}}{\longleftarrow} m_{k(l)} 
\stackrel{w_{l}}{\longleftarrow} \hdots
\stackrel{w_1}{\longleftarrow} m_{k(0)+1} \stackrel{X^{j(0)}}{\longleftarrow} m_{k(0)} 
\stackrel{w_{0}}{\longleftarrow} m_0
\end{equation}
where for each $i \leq l$, $j(i) \in \{ -1,1\}$ and $\{ k(i) \colon i \leq l\}$ is the increasing enumeration of the set of $k$ such that
$m_k =n$ and $X$ is applied or $m_k = n'$ and $X^{-1}$ is applied at step $k$.
Thus by definition $w_i[s] = w_i [s']$ for each $i \leq l +1$.

The following hold by definition of the $k(i)$ and by choice of $n'$:
\begin{enumerate}[(i)]
\item Unless $i=0$ or $i=l+1$, $w_i \neq \emptyset$: for assuming otherwise,  $j(i-1) = j(i)$ is impossibly as $n \neq n'$ ; but $j(i-1) \neq j(i)$ is also impossibly as adjacent $X$ and $X^{-1}$ cannot cancel (we assume $i\notin\{0,l+1\}$ here because of course, no such cancellation occurs at the word boundary).
\item\label{i.2} For no $i \leq l$ is it the case that $w_{i}$ sends $n$ to $n'$ or vice versa.
This is by choice of $n'$ as in \eqref{e.path.basic}.
\item\label{i.3} For no $i \leq l$ is $n'$ a fixed point of $w_i[s]$ unless $w_i = \emptyset$, again by choice of $n'$ as in \eqref{e.path.basic}.
\end{enumerate}
From this it follows that if $0 < i < l+1$, the values in the path appearing adjacent to $w_i$ (i.e., $m_{k(i-1)+1}$ and $m_k(i)$) are both $n$. 
There is at least one such $i$, for the path cannot have the following form:
\begin{equation}\label{e.path.not}
m_0 
\stackrel{w_1}{\longleftarrow} m_{k(0)+1} \stackrel{X^{j(0)}}{\longleftarrow} m_{k(0)} 
\stackrel{w_{0}}{\longleftarrow} m_0
\end{equation}
for then $w_0 w_1$---a subword of a cyclic shift of $w$--- or its inverse sends $n$ to $n'$ which is impossible by choice of $n'$.
Thus $w_2$ and $j(1)$ are defined; as $n'$ can appear on neither side of $w_1$ we have $n \in \fix(w_1)$,
$m_{k(1)} = m_{k(1)+1} = n$, $j(0) = -1$, and $j(1)=1$. (Observe that here the proof is done if we work with the definition of our partial order as given in \eqref{d.Q.simple.order'}).

Finally $j(2)$ cannot be defined as otherwise $w_3$ must be non-empty and send $n'$ to one of $\{ n, n'\}$, contradicting Items \ref{i.2} or \ref{i.3} above.
So the path in \eqref{e.path.basic} has the following form:
\begin{equation}\label{e.path.semifinal}
m_0 
\stackrel{w_3}{\longleftarrow} n' \stackrel{X^{-1}}{\longleftarrow} n 
\stackrel{w_2}{\longleftarrow} n \stackrel{X}{\longleftarrow} n' 
\stackrel{w_{1}}{\longleftarrow} m_0
\end{equation}
As $w_3 w_1$ is a subword of a cyclic shift of $w$,
$w_3 w_1 = \emptyset$ since we made sure $n' \notin \fix(w_3 w_1[s])$ otherwise.
So $w_3 = {w_1}^{-1}$ and $w_2$ is a conjugate subword of $w$.
In fact, by the presence of $X$ and $X^{-1}$ adjacent to $w_2$, it is a proper conjugate subword.
Since we assumed that $w$ had no proper conjugate subword, we reach a contradiction.
\end{proof}

\section{Optimal projective witnesses}

Questions about the spectrum of a hypergraph $G$ other than to give a particular value to its minimum are of interest.
For example, one can ask whether it is consistent with $\neg\CH$ that $\operatorname{Spec}(G) = \{\mathfrak a_G, 2^\omega\}$, or how to find maximal discrete set whose size lies strictly between the minimum and $2^\omega$ in $\operatorname{Spec}(G)$.

In particular, one might be interested in the minimum definitional complexity of a maximal discrete set $D$ of a given size $\kappa$ from the spectrum of $G$.
\begin{definition}
Fix a hypergraph $G=(X,H)$ and a cardinal $\gamma \in \operatorname{Spec}(G)$.
We say $D$ is a \emph{$\Pi^1_n$ (resp.\ $\Delta^1_n$) witness to $\gamma$} if $D$ is $\Pi^1_n$ (resp.\ $\Delta^1_n$) maximal $G$-discrete set of size $\gamma$.
We say such a witness is \emph{optimal} if there is no $\Sigma^1_{n-1}$ or $\Pi^1_{n-1}$ maximal $G$-discrete $D'$ set  with $\lvert D\rvert = \kappa$---i.e., there is no witness to $\gamma$ of strictly lower complexity in terms of the projective hierarchy.
The term \emph{(optimal) projective witness (to $\gamma$)} has the obvious meaning, i.e., a $\Sigma^1_{n-1}$ or $\Pi^1_{n-1}$ such witness for some $n\in\omega$.
\end{definition}

\begin{conjecture}
Suppose $\GCH$ and that $\omega_1 < \gamma<\kappa$ are cardinals of uncountable cofinality. Then there is a cardinal preserving forcing extension of $\Ve$ in which 
$2^\omega=\kappa$ and there is a projective witness to $\gamma$. 
In fact there exists a good $\Pi^1_2$ witness to $\gamma$.
\end{conjecture}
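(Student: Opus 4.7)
The plan is to adapt the construction template for optimal projective witnesses (as developed for mad families and maximal cofinitary groups in, e.g., the Fischer--Friedman--T\"ornquist line of work) to the present setting, which I interpret as concerning $G_{\mathrm{mcg}}$. Start over $\eL$, so that $\GCH$ and a fixed $\Sigma^1_2$-good wellorder of the reals are available as parameters. The skeleton of the forcing is a matrix/template iteration of total length $\kappa$: one distinguished column, of length $\gamma$, iterates (a refinement of) Zhang's forcing $\Q_{\mathcal G}$ with built-in coding, while the remaining columns add Cohen reals and push $2^\omega$ up to $\kappa$. At stage $\alpha$ of the distinguished column, the iterand is $\Q_{\mathcal G_\alpha}$ for $\mathcal G_\alpha = \langle \sigma_\beta : \beta < \alpha \rangle$, where each $\sigma_\beta$ is the Zhang-generic added at stage $\beta$; conditions are additionally required to commit to an almost-disjoint coding (in the style of Jensen) of a real $y_\alpha$ that fixes, via the $\Sigma^1_2$-good wellorder, an $\eL$-code of $(\alpha,\sigma_\alpha)$ and its place in the iteration.

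Granted this setup, the definability of $\mathcal G_\gamma$ in the final extension is the cleanest part: $\tau \in \mathcal G_\gamma$ becomes $\Sigma^1_2$, witnessed by a finite word $w \in W_{\mathcal G,X}$ and decoded generics $\sigma_{\alpha_0},\hdots,\sigma_{\alpha_k}$ such that $\tau = w[\sigma_{\alpha_0},\hdots,\sigma_{\alpha_k}]$, while $\tau \notin \mathcal G_\gamma$ is $\Sigma^1_2$ via a coded catching configuration (a word $w'$ and a finite injection $s$ extending $\sigma_{\alpha_0},\hdots,\sigma_{\alpha_k},\tau$ whose image $w'[s]$ forces infinitely many fixed points). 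Together these produce a $\Pi^1_2$, in fact $\Delta^1_2$, witness, with \emph{goodness} to be derived from the stability of the almost-disjoint coding under further $\omega^\omega$-bounding forcing. Bookkeeping along the active column, combined with ccc-ness (indeed $\sigma$-linkedness) of the individual $\Q_{\mathcal G_\alpha}$, is used to catch every $\tau \in S_\infty$ appearing anywhere in the matrix, yielding maximality of $\mathcal G_\gamma$ in the final model; cardinal preservation at $\gamma$ and $\kappa$ then reduces to the standard matrix-iteration chain conditions.

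The main obstacle I expect is a refined version of the domain-extension Lemma~\ref{l.domain.ext} that respects the coding constraints: one must show that the cofinite set of admissible $n'$ in the conclusion of that lemma remains infinite, and indeed unbounded within any prescribed arithmetical pattern, after intersecting with the coding requirement on $\sigma^G$. This is where the bulk of the technical work will go; it parallels, but is more delicate than, the analogous step in the mad family case, because the word structure of $\mathcal G \ast \bF(X)$ propagates the coding constraint along paths of the form $m_0 \leftarrow m_1 \leftarrow \hdots$ exactly of the kind exploited in the proof of Lemma~\ref{l.domain.ext}. A secondary obstacle is ensuring, at the matrix level, that every Cohen real in a side column is caught by the active column: Cohen reals can code arbitrary permutations, so one needs a careful indexing so that each such permutation is addressed at some later active stage, together with an appropriate projection/quotient structure on the matrix so that the relevant intermediate models see enough of the side columns. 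Once these two points are handled, the verification that the resulting $\mathcal G_\gamma$ is a good $\Pi^1_2$ maximal cofinitary group of size $\gamma$ in a model with $2^\omega=\kappa$ should follow by the now-standard absoluteness arguments.
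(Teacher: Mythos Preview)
The paper does not prove this statement: it is explicitly labelled a \emph{conjecture}, and immediately after stating it the author remarks that only a restricted version---with $\gamma,\kappa$ below $\aleph_\omega$---has been established (in \cite{good-projective}), and that the full statement ``would presumably use Jensen coding, which is notoriously cumbersome to work with.'' So there is no proof in the paper to compare your proposal against.

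What you have sketched is essentially the template that yields the known restricted result, and the paper's own comment identifies precisely where your outline falls short of the full conjecture. Your phrase ``almost-disjoint coding (in the style of Jensen)'' conflates two very different techniques: almost-disjoint coding is a light ccc device that suffices when $\gamma<\aleph_\omega$, because the finitely many cardinals below $\gamma$ can be absorbed into a $\Pi^1_2$ description; Jensen coding (in the Beller--Jensen--Welch sense, coding the universe by a real) is the much heavier machinery apparently needed to make the definition $\Pi^1_2$ for arbitrary $\gamma$. Your proposal does not engage with this distinction, and your remark that starting over $\eL$ supplies ``a fixed $\Sigma^1_2$-good wellorder of the reals'' available ``as a parameter'' does not survive the forcing: after blowing up the continuum to $\kappa$ the $\eL$-wellorder no longer wellorders the reals of the extension, and recovering enough of a projective wellorder to locate the $\gamma$ many generics is exactly the hard part. (Note also that the conjecture is stated over an arbitrary $\Ve\models\GCH$, not just $\eL$.)

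So your sketch is a reasonable plan for the $\aleph_\omega$-bounded theorem already in the literature, but it is not a proof of the conjecture as stated; the missing idea is the replacement of almost-disjoint coding by genuine Jensen coding (or an equivalent device) to handle $\gamma\geq\aleph_\omega$, which is precisely the obstacle the paper flags as open.
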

A version of this conjecture has been shown in \cite{good-projective}; unfortunately, this version restricts $\gamma, \kappa$ to lie below $\aleph_\omega$. One reason for this is that the proof of the full conjecture would presumably use Jensen coding, which is notoriously cumbersome to work with.

\medskip

The following loosely related question is open:
Is it consistent with $\ZF + \DC + \neg\CH$ that every cofinitary group $\mathcal G$ with $\lvert \mathcal G\rvert < 2^\omega$ can be extended to a projective maximal cofinitary group?

This question we can answer as follows:
\begin{theorem}
It is consistent with $\ZF + \DC + \neg\CH$ that every cofinitary group $\mathcal G$  with $\lvert \mathcal G\rvert < 2^\omega$ can be extended to a $\mathbf{\Pi}^1_2$ maximal cofinitary group.
\end{theorem}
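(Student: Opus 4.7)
The plan is to realize the theorem in a Solovay-type model. Force over $\eL$ (containing an inaccessible $\kappa$) with $\Coll(\omega,{<}\kappa)$ to obtain $\eL[G]$, and let $\Ve^* := \eL(\reals)^{\eL[G]}$. Then $\Ve^*\models \ZF+\DC+\neg\CH$, every set in $\Ve^*$ is definable from a real and an ordinal, and every well-orderable set of reals in $\Ve^*$ is countable. In particular, any cofinitary group $\mathcal G \in \Ve^*$ with $|\mathcal G|<2^\omega$ is countable, coded by some real $x\in\Ve^*$, and lies in the intermediate model $\eL[x]$.

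Given such $\mathcal G\in\eL[x]$, the aim is to produce a $\mathbf{\Pi}^1_2$ formula $\varphi(\cdot\,,x)$ whose extension in $\Ve^*$ is an MCG containing $\mathcal G$. I would work inside $\eL[x]$ and build a generic filter for a finite-support iteration, of length $\omega_1^{\eL[x]}$, of the coded variant $\Q_{\mathcal G}^{\mathrm{code}}$ of Zhang's forcing announced in the paper's abstract. The coded variant modifies Definition~\ref{d.Q.simple} by demanding that each generic permutation exhibit a distinctive orbit pattern detectable by a $\Pi^1_2$ formula; the Domain Extension Lemma~\ref{l.domain.ext} provides the slack to reserve values for such coding without destroying cofinitariness.

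The key steps are: (i) define $\Q_{\mathcal G}^{\mathrm{code}}$ and verify an analogue of Lemma~\ref{l.domain.ext} for it; (ii) run a finite-support iteration along $\omega_1^{\eL[x]}$ with bookkeeping that diagonalizes, at stage $\alpha$, against a name $\dot\tau_\alpha$ for a potential extending permutation; (iii) design $\varphi(\cdot\,,x)$ to recognize exactly those permutations in $\Ve^*$ whose orbit structure relative to $\mathcal G$ exhibits the intended coding pattern; (iv) verify that the set so defined in $\Ve^*$ is an MCG extending $\mathcal G$.

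The main obstacle is step (iv): maximality must hold in $\Ve^*$, not merely in the iterate over $\eL[x]$. Any alleged extending permutation $\tau\in\Ve^*$ lies in some $\eL[x,y]$ and is the interpretation of an $\eL[x]$-name, so one must verify that the bookkeeping considered all such names and that the $\mathbf{\Pi}^1_2$ absoluteness of $\varphi$ transports the diagonalization up to $\Ve^*$. This is analogous to the construction of optimal projective witnesses discussed earlier in the paper, now adapted to Zhang's forcing with coding and a Solovay-type target model.
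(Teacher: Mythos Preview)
Your Solovay-model approach has a structural gap at step (iv) that is more serious than you acknowledge. In $\Ve^*=\eL(\reals)^{\eL[G]}$ every well-orderable set of reals is countable, so indeed any $\mathcal G$ with $\lvert\mathcal G\rvert<2^\omega$ is countable and coded by a real $x$. But a finite-support iteration of the coded Zhang forcing of length $\omega_1^{\eL[x]}$ over $\eL[x]$ produces a cofinitary group of cardinality $\omega_1^{\eL[x]}$, and $\omega_1^{\eL[x]}$ is \emph{countable} in $\Ve^*$. Countable cofinitary groups are never maximal, so whatever group your iteration builds is not an MCG in $\Ve^*$. Your formula $\varphi(\cdot,x)$ then faces a dilemma: if it defines exactly the group generated by the iteration's generics, that set is countable and not maximal; if it defines a larger set, you have given no argument that this set is cofinitary, or even a group. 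The bookkeeping in step (ii) diagonalizes against only $\omega_1^{\eL[x]}$-many names, far short of the continuum-many permutations living in $\Ve^*$.

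The paper does not spell out a full proof here---the result is announced and the main tool is developed, with details deferred to \cite{good-projective}---but the intended model is of a very different kind, as signalled by the sentence immediately following the theorem: in that model there is a $\mathbf{\Pi}^1_2$ witness to \emph{every} cardinal of uncountable cofinality in $\operatorname{Spec}(G_{\mathrm{mcg}})$. That is incompatible with a Solovay-type target. The model is a $\ZFC$ extension of $\eL$ (hence trivially $\ZF+\DC$) with $2^\omega$ large, obtained by an iteration in which the coding variant $\Q^{\bar z}_{\mathcal G}$ of Zhang's forcing is interleaved with bookkeeping that anticipates all cofinitary groups of size below the continuum and extends each along a cofinal segment of the iteration. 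The map $\bar z$ is chosen so that each generic permutation encodes, via the coding clause of Theorem~\ref{prop.group.forcing}, enough of the construction that membership in the resulting MCG becomes $\mathbf{\Pi}^1_2$.
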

In the model witnessing the above, there is a $\mathbf{\Pi}^1_2$ witness to every cardinal of uncountable cofinality in $\operatorname{Spec}(G_{\mathrm{mcg}})$.

\medskip

To prove this and similar theorems, or generally when one wants to create optimal projective witnesses for $G_{\mathrm{mcg}}$, a version of Zhang's forcing with ``built-in coding'' becomes important. We shall now present such a forcing (for a comprehensive treatment, see \cite{good-projective}; more or less distant ancestors can also be found in \cite{mcg-cohen,kastermans}).

\subsection{Zhang's forcing with coding}

Fix a cofinitary group $\mathcal G$.
Our goal is again to add a generic permutation $\sigma$ such that $\la \mathcal G\cup \{\sigma\}\ra$ is cofinitary and maximal with respect to permutations from the ground model.
This time, we want to make sure that each permutation $g\in \la \mathcal G\cup \{\sigma\}\ra\setminus \mathcal G$ 
``codes'' a real $z^g$ in the sense that $z^g$ is computable from $g$---similar to the assumption made in Theorem~\ref{t.zoltan}.
The map $g \mapsto z^g$ will be given at the outset, in the ground model, in the form of a map 
\[
\bar z\colon\mathcal G * \mathbb{F}(X) \setminus \mathcal G \to {}^\omega 2.
\]

\medskip

Before we can define the forcing, we need to fix the algorithm used in the coding of $z^{w}$ by the generic permutation $w[\sigma^G]$.
\begin{definition}[Coding]\label{d.coding} 
Let a sequence $\chi \in 2^{\leq\omega}$ be given. 
Suppose $\sigma$ is a partial function from $\omega$ to $\omega$, $w\in W_{\mathcal G,X}$ has no proper subwords $w'$ such that $w=(w')^n$ as well as no proper conjugate subwords, and $X$ or $X^{-1}$ occurs at least once in $w$.
\begin{enumerate}

\item
We say $(w,\sigma)$ \emph{codes} $\chi$ \emph{with parameter} $m$ if and only if 
\begin{equation}\label{e.code}
(\forall k < \lh(\chi)) \; w^{3k}[\sigma](m) \equiv \chi(k) \pmod{2}.
\end{equation}
\item Suppose now that $\lh(\chi) <\omega$. 
We say that  $(w,\sigma)$  \emph{exactly codes} $\chi$
\emph{with parameter} $m$ if $(w,\sigma)$ codes $\chi$ 
and in addition
\[
X^i g^w_R w^{3k}[\sigma](m)\text{  is undefined,}
\]
where $i$ is the sign of $j^w_{0}$, i.e., the exponent of the right-most occurrence of $X$ or $X^{-1}$ in $w$.
In other words, at least for $\lh(\chi)>0$, $(w,\sigma)$ exactly codes $\chi$ if the path of $m$ under $(w,\sigma)$ is of minimal possible length under the requirement that it codes $\chi$.
We require this so we have enough freedom to separate different coding paths for incomparable words in the density argument showing that it is forced that $w[\sigma_{\dot G}]$ codes $\bar z(w)$.
\item We say that $m'$ is \emph{the critical point in the path of $m$ under $(w,\sigma)$} if for some $k\in\omega$,
\[
m' = (g^w_L X^i)^{-1}w^{3(k+1)}[\sigma](m) 
\]
where $i$ is the sign of $j^w_{l(w)-1}$, i.e., the exponent of the left-most occurrence of $X$ or $X^{-1}$ in $w$.
We define this terminology because, when extending $\sigma$ so that the path of $m$ increases in length with the purpose of achieving exact coding of a given $\chi$, it is precisely at critical points that exact coding imposes a non-trivial requirement for this extension.
\end{enumerate}
\end{definition}

Let $\mathcal G'$ denote the set of elements of $w\in\mathcal G * \mathbb{F}(X)$ which have no proper conjugate subword, so that for no proper subword $w'$ we have $w=(w')^n$, and so that  
$X$ or $X^{-1}$ occurs at least once in $w$. 
Now in addition to the cofinitary group $\mathcal G$, suppose we are given
a map 
\[
\bar z\colon \mathcal G' \to {}^\omega 2.
\]

\begin{definition} Let the forcing $\Q^{\bar z}_{\mathcal G}$ consist of conditions $p=(s^p,F^p, \bar m^p)$ where 
$(s^p,F^p) \in\Q_{\mathcal G}$ and $\bar m^p$ is a finite partial function from $\mathcal G'$ to $\omega$, and moreover $p$ satisfies
\begin{enumerate}[label=(\Alph*),ref=\Alph*,start=5]
\item\label{Q.exact} for each $w \in \dom(\bar m^p)$ there exists a (unique) $l$ which we denote by $l^p_w$ such that $(w,s^p)$ exactly codes $\bar z(w)\res l$ with parameter $\bar m^p(w)$.
\end{enumerate}

The ordering on $\Q^{\bar z}_{\mathcal G}$ is as follows: 
Let $q \leq_{\Q^{\bar z}_{\mathcal G}} p$ if $(s^q,F^q) \leq_{\Q_{\mathcal G}} (s^p,F^p)$ in $\Q_{\mathcal G}$ and $\bar m^p \subseteq \bar m^q$.
\end{definition}

One can then show the following:

\begin{theorem}\label{prop.group.forcing}
Suppose that $\mathcal G$ is a cofinitary group with the following property:
For each $n\in\omega$, $i_0, \hdots, i_n \in \{0,1\}$ and $g_0, \hdots, g_n \in \mathcal G$, there are infinitely many $m$ such that  for each $j\leq n$ it holds that 
$g_j(m) \equiv i_j \pmod 2$.
Let $G$ be a $\Q^{\bar z}_{\mathcal G}$-generic filter and let
$$\sigma^G=\bigcup\{s^p \setdef p\in G \}.$$
The permutation $\sigma^G$ has the following properties:

\begin{enumerate}[label=(\Roman*)]
\item\label{i.A} The group $\la \mathcal G\cup\{\sigma^G \}\ra$ is cofinitary.

\item\label{i.B} If $\tau$ is a ground model permutation, $\tau\notin \mathcal G$,  and $\la \{\tau\}\cup{\mathcal G}\ra$ is cofinitary, there are infinitely many $n$ such that
$\tau(n)=\sigma^G(n)$ and so $\la{\mathcal G}\cup\{\sigma^G\}\cup\{\tau\}\ra$ is not cofinitary;

\item\label{i.C} For each $w\in \mathcal G'$ there is $m\in \omega$ such that
$w[\sigma^G]$ codes $\bar z(w)$ with parameter $m$.

\end{enumerate}
\end{theorem}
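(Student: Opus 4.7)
The plan is to derive (I)--(III) from a suite of density arguments on $\Q^{\bar z}_{\mathcal G}$ that refine the corresponding arguments for $\Q_{\mathcal G}$. The crucial technical step is to prove a version of the Domain Extension Lemma~\ref{l.domain.ext} adapted to $\Q^{\bar z}_{\mathcal G}$: given $p\in \Q^{\bar z}_{\mathcal G}$ and $n\notin\dom(s^p)$, for all but finitely many $n'$ the triple $q=(s^p\cup\{(n,n')\}, F^p, \bar m^p)$ is again a condition. The new obstruction is clause \eqref{Q.exact}: one must avoid the finitely many $n'$ which would extend an already-exact coding path for some $w\in\dom(\bar m^p)$ beyond $\bar z(w)\res l^p_w$. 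Moreover, using the parity-richness hypothesis on $\mathcal G$, the $n'$ can additionally be prescribed modulo $2$; this refinement is what allows coding densities to be met. A symmetric statement for range extension follows by the same argument applied to inverses.

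First, I would use the refined domain/range extensions to show that for each $n\in\omega$ the sets $\{p : n\in \dom(s^p)\}$ and $\{p : n\in\ran(s^p)\}$ are dense, so that $\sigma^G$ is a total bijection of $\omega$. Next, for (I), I would show that for every $w\in W_{\mathcal G,X}$ without proper conjugate subwords and containing at least one $X^{\pm 1}$, the set $\{p : w\in F^p\}$ is dense; together with clause \eqref{d.Q.simple.order} of the ordering this forces $\fix(w[\sigma^G])=\fix(w[s^p])$ for any $p\in G$ with $w\in F^p$, hence finite. Since an arbitrary nontrivial element of $\la \mathcal G\cup\{\sigma^G\}\ra$ is of the form $w[\sigma^G]$ for some such $w$ (and since cyclic shifts share their fixed-point count), $\la \mathcal G\cup\{\sigma^G\}\ra$ is cofinitary.

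The core of the argument is (III). For each $w\in\mathcal G'$ I would show that $\{p : w\in\dom(\bar m^p)\}$ is dense and that, for each $l\in\omega$, $\{p : w\in\dom(\bar m^p),\ l^p_w\ge l\}$ is dense. Starting from $p$, choose $m$ very large and outside every finite path appearing in $p$; then extend $s^p$ one step at a time along the prospective $(w,\bullet)$-path of $m$, applying the refined extension lemma at each step, and choosing $n'$ with the parity dictated by the appropriate bit of $\bar z(w)$ at each critical point in the sense of Definition~\ref{d.coding}. The parity-richness of $\mathcal G$ combined with the refined extension lemma makes each such step possible. Iterating this until $l^p_w$ bits of $\bar z(w)$ have been coded, and setting $\bar m^q(w)=m$, produces the desired extension. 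A standard fusion-free density argument then drives $l^q_w$ above any prescribed bound, yielding full coding of $\bar z(w)$ by $w[\sigma^G]$.

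For (II), given a ground-model $\tau\notin\mathcal G$ with $\la \mathcal G\cup\{\tau\}\ra$ cofinitary and a condition $p$, I would find $n\notin\dom(s^p)$ so large that $\tau(n)\notin\ran(s^p)$, $\tau(n)$ avoids the finite forbidden list of the refined extension lemma, and $n$ is not a critical point of any coding path already recorded in $\bar m^p$; such $n$ exist since only finitely many values are forbidden at each step and, by cofinitariness of $\la\mathcal G\cup\{\tau\}\ra$, $\tau$ agrees with any fixed $w[s^p]\,(w\in F^p)$ only finitely often. Setting $s^q=s^p\cup\{(n,\tau(n))\}$ forces one more agreement, and iterating produces infinitely many, so $\la\mathcal G\cup\{\sigma^G\}\cup\{\tau\}\ra$ contains an element with infinitely many fixed points, proving (II). The main obstacle throughout is ensuring that the path extensions carried out in the coding step do not retroactively violate exactness for other words already in $\dom(\bar m^p)$ or create unwanted fixed points for words in $F^p$; this is managed by bookkeeping the (finite) union of all forbidden values and paths at each stage, which remains finite so long as only finitely many new elements are added to $s^p$.
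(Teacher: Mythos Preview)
Your overall plan---reduce (I)--(III) to density arguments and adapt the Domain Extension Lemma to $\Q^{\bar z}_{\mathcal G}$---is the right shape, and for (I) and (II) it matches the standard treatment. The paper itself omits the proofs of (I)--(III), referring to \cite{good-projective}, and only sketches the density for (III) (stated there as Conjecture~\ref{lemma.generic.coding}); so the relevant comparison is between your handling of (III) and that sketch.

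There is a genuine gap in your refined Domain Extension Lemma and in your treatment of (III). Your lemma asserts that for every $n\notin\dom(s^p)$, only finitely many $n'$ are bad for clause~\eqref{Q.exact}. This is not so. If $n$ happens to be the terminal point of the coding path of some $w'\in\dom(\bar m^p)$ at which $X$ (rather than $X^{-1}$) is next applied, then \emph{every} $n'$ extends that path, and the resulting $s^p\cup\{(n,n')\}$ will typically fail to exactly code any initial segment of $\bar z(w')$. So the obstruction from \eqref{Q.exact} is not of the form ``avoid finitely many $n'$''; for certain $n$ it forces you to deal with the extended path of $w'$ head-on. This matters precisely in the hard case of (III): once $w\in\dom(\bar m^p)$ you cannot pick a fresh $m$, you must extend the existing coding path of $w$, and the first step of that extension is exactly an $(n,n')$ with $n$ the terminal point.

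The deeper issue, which your ``bookkeeping of forbidden values'' does not address, is that coding paths of different words in $\dom(\bar m^p)$ can \emph{merge}: the same extension $(n,n')$ simultaneously advances several coding paths, and at a common critical point you face several parity constraints at once---one for each $g^{w'}_L$. Avoidance is impossible here; you must satisfy all constraints simultaneously. The paper's sketch isolates exactly this: one first uses Domain-Extension-style choices to separate the coding paths of $\subseteq$-incomparable words in $\dom(\bar m^p)$ before any of them reaches its next critical point; the remaining words whose paths cannot be separated differ only in their outermost $\mathcal G$-letters, and it is precisely for these that the full parity-richness hypothesis on $\mathcal G$ (a simultaneous constraint on $g_0(m),\ldots,g_n(m)$, not just a single $g$) is invoked to pick one $n'$ meeting all their coding bits at once. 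Your use of parity-richness---prescribing $n'\pmod 2$ for the single word $w$ being coded---does not exploit the hypothesis in this multi-word form and so does not close the gap.
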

We omit all proofs regarding this forcing; but we shall try to give at least an idea of how the forcing generically achieves the coding of $\bar z(w)$ by $w[\sigma^G]$.

\begin{conjecture}[Generic Coding]\label{lemma.generic.coding}
Suppose $w\in W_{\mathcal G,X}$ with at least one occurrence of $X$ or $X^{-1}$, without proper conjugate subwords, and so that for no proper subword $w'$ of $w$ does it holds that $w=(w')^n$ (for some $n$).
For $l \in \omega$, let $D^{\textup{code}}_{w,l}$ denote the set of $q\in\Q^{\bar z}_{\mathcal G}$ such that $w \in \dom(\bar m^q)$ and for some $l' \geq l$,
$q$ exactly codes $\bar z(w)\restriction l'$ with parameter $\bar m^q(w)$. Then $D^{\textup{code}}_{w,l}$ is dense in $\Q^{\bar z}_{\mathcal G}$.
\end{conjecture}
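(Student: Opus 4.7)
The plan is to prove density of $D^{\textup{code}}_{w,l}$ by an iterative extension argument, with Lemma~\ref{l.domain.ext} and the parity hypothesis on $\mathcal G$ from Theorem~\ref{prop.group.forcing} as the main tools. Given $p \in \Q^{\bar z}_{\mathcal G}$, I will build a decreasing chain of conditions $p \geq p^{l_0} \geq p^{l_0+1} \geq \cdots \geq p^{l'}$ in $\Q^{\bar z}_{\mathcal G}$, each with a fixed parameter $m := \bar m^{p^{l_0}}(w) = \cdots = \bar m^{p^{l'}}(w)$, such that $(w, s^{p^k})$ exactly codes $\bar z(w) \restriction k$ with parameter $m$. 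Upon reaching $l' \geq l$ I set $q := p^{l'} \in D^{\textup{code}}_{w,l}$.

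For the base case, if $w \in \dom(\bar m^p)$, take $m := \bar m^p(w)$, $l_0 := l^p_w$ and $p^{l_0} := p$; exactness at level $l_0$ is then given by requirement~\eqref{Q.exact}. Otherwise, I choose $m$ with $m \equiv \bar z(w)(0) \pmod{2}$ (to enable future parity coding from $k = 0$), with $g^w_R(m) \notin \dom(s^p) \cup \ran(s^p)$, and with $m$ avoiding the finite union of vertices already lying on the paths of other coded words $w' \in \dom(\bar m^p)$; infinitely many such $m$ exist. Then set $l_0 := 0$ and $p^{l_0} := (s^p, F^p, \bar m^p \cup \{(w, m)\})$.

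For the inductive step, to pass from $p^k$ to $p^{k+1}$ I extend $s^{p^k}$ letter-by-letter along three further copies of $w$, beginning at the current path endpoint. Each extension requiring a new entry of $s$ (at an $X^{\pm 1}$-step whose input lies outside the relevant side of $s^{p^k}$) is handled by Lemma~\ref{l.domain.ext}, applied successively to each word in the finite family $F^{p^k} \cup \{w\} \cup \dom(\bar m^{p^k})$---each of which has no proper conjugate subwords, so the lemma's hypothesis is met---yielding a cofinite set of new values that preserve the fixed-point structure of every such word. From this cofinite set I further exclude the finitely many values whose insertion would spontaneously extend the coding path of some $w' \in \dom(\bar m^{p^k}) \setminus \{w\}$ (namely the values $g^{w'}_R(w'^{3 l^{p^k}_{w'}}[s^{p^k}](\bar m^{p^k}(w')))$ and their range-side counterparts). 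At the critical point of the new segment (Definition~\ref{d.coding}), the parity hypothesis on $\mathcal G$ provides enough freedom across parity classes, after composing with the intervening group elements, to choose a value forcing $w^{3(k+1)}[s](m) \equiv \bar z(w)(k) \pmod{2}$. Finally, I stop extending $s$ at the tail of the new segment so that $X^i g^w_R w^{3(k+1)}[s](m)$ remains undefined, restoring exactness and completing $p^{k+1}$.

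The main obstacle is coordinating these three constraints simultaneously at the inductive step: preserving the fixed-point data in $F^{p^k}$, preserving the exact coding of all other words in $\dom(\bar m^{p^k})$, and prescribing the right parity at the critical point. Lemma~\ref{l.domain.ext} handles the first uniformly, the parity hypothesis on $\mathcal G$ handles the third, and the cofinite margin at each $s$-extension easily absorbs the finitely many forbidden cascade values for the second. It is precisely the hypothesis that $w$ has no proper conjugate subwords and is not a proper power that enables Lemma~\ref{l.domain.ext} to apply to $w$ itself, so that arbitrary initial segments of $\bar z(w)$ can in fact be coded by the path of a single fixed parameter.
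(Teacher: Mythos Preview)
Your overall architecture---base case, then increment $l^{p^k}_w$ by one via letter-by-letter Domain Extension---matches the paper's sketch. But there is a genuine gap in how you handle the exact-coding constraints of the \emph{other} words $w'\in\dom(\bar m^{p^k})$.

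You claim that preserving their exact coding is a matter of excluding finitely many ``cascade values'' from the cofinite set supplied by Lemma~\ref{l.domain.ext}. This does not work. The obstruction is on the \emph{domain} side, not the range side: if the coding path of some $w'$ has merged with the coding path of $w$---meaning both paths sit at the same vertex $n$ awaiting the same application of $X^{\pm 1}$---then the moment you add any pair $(n,n')$ to $s$ in order to advance $w$'s path, you unavoidably advance $w'$'s path as well. No choice of $n'$ avoids this, because $n$ is forced. Once $w'$'s path is extended, its exact-coding requirement~\eqref{Q.exact} is violated and must be restored, which may impose its own parity constraint at its own critical point.

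The paper's sketch addresses exactly this: it argues that for $\subseteq$-incomparable words one can force the paths to diverge before any critical point is reached (using the first letter at which $w$ and $w'$ disagree), but for $\subseteq$-comparable words---in particular words differing only in $g^w_L$ and $g^w_R$---the paths cannot be separated, and one must satisfy \emph{several} parity constraints simultaneously at the critical step. This is precisely why the hypothesis on~$\mathcal G$ in Theorem~\ref{prop.group.forcing} is stated for arbitrary finite tuples $(g_0,\ldots,g_n;i_0,\ldots,i_n)$, not just a single $g$. You invoke that hypothesis only for $w$'s own critical point, which is not enough. The paper itself flags this as ``combinatorially involved'' and defers the full argument; your proposal does not yet engage with it.
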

\begin{proof}[Sketch of proof idea.]
Let $w$ and a condition $q \in\Q^{\bar z}_{\mathcal G}$ be given and write $l=l^q_w$; it suffices to find a stronger condition $r$ such 
that  $l^r_w = l +1$.

To achieve $l^r_w = l +1$ we must ensure exact coding of $\bar z(w) \res l+1$.
The only difficulty is that there may be other words $w' \in \dom(\bar m^q)$ whose coding paths merge with the coding path of $w$; here, by a coding path of a word $w'$ we mean the path under $(w',s^q)$ of $\bar m^q(w')$.

So our task is to extend $s^q$ finitely many times to obtain $s^r$ in such a way that with each extension, we separate the coding paths of each of the relevant words $w' \in \dom(\bar m^q)$, if possible. 
To do this one uses the same method as in the proof of the Domain Extension Lemma~\ref{l.domain.ext}.

It turns out that the only coding paths which cannot be made to diverge before reaching a critical point are those of words $w', w'' \dom(\bar m^q)$ such that $w'$ is a subword of $w''$ or vice versa.
This is because if $w'$ is $\subseteq$-incomparable to $w''$ then before reaching the next critical point in the coding path of $w'$ we must reach a point where $w'$ and $w''$ differ in the next letter to be applied. 
So we take successive extensions of $s^q$ in such a way that the coding paths associated to $\subseteq$-incomparable $w',w''\in\dom(\bar m^q)$ separate when they reach the first letter where $w'$ itself diverges from $w''$  (here one uses that $\mathcal G$ is cofinitary).

It remains to see that none of the exact coding requirements for the $\subseteq$-comparable pairs of words $w', w''\in \dom(\bar m^q)$ create a conflict. 
For this, it might be necessary to make additional assumptions on $\mathcal G$; in any case, the full proof will be combinatorially involved.
In particular, one shows that the only coding paths which cannot be made to diverge before reaching a critical point are those of words $w', w'' \dom(\bar m^q)$ which differ only in their first and last letters from $\mathcal G$. 
Since any two such words differ in their last letter, we can use the property of $\mathcal G$ in the hypothesis of Theorem~\ref{prop.group.forcing} to simultaneously satisfy all coding requirements.
See the upcoming \cite{good-projective} for details.
\end{proof}
\medskip

\noindent
{\it 
The present version of this article contains some corrections in comparison to the published version.
}

\medskip
\noindent
{\it Acknowledgments:
The author was supported by Vera Fischer's FWF Start Grant Y1012.
}

\bibliography{monsters}{}

\providecommand{\bysame}{\leavevmode\hbox to3em{\hrulefill}\thinspace}
\providecommand{\MR}{\relax\ifhmode\unskip\space\fi MR }
\providecommand{\MRhref}[2]{%
  \href{http://www.ams.org/mathscinet-getitem?mr=#1}{#2}
}
\providecommand{\href}[2]{#2}
\begin{thebibliography}{10}

\bibitem{karen}
K.~Bakke~Haga, D.~Schrittesser, and A.~T{\"o}rnquist, \emph{Mad families,
  determinacy, and forcing},
  \href{https://arxiv.org/abs/1810.03016}{\url{arXiv:1810.03016 [math.LO]}}.

\bibitem{schindler1}
Mariam Beriashvili, Ralf Schindler, Liuzhen Wu, and Liang Yu, \emph{Hamel bases
  and well-ordering the continuum}, Proc. Amer. Math. Soc. \textbf{146} (2018),
  no.~8, 3565--3573. \MR{3803680}

\bibitem{handbook}
Andreas Blass, \emph{Combinatorial cardinal characteristics of the continuum},
  Handbook of set theory. {V}ols. 1, 2, 3, Springer, Dordrecht, 2010,
  pp.~395--489. \MR{2768685}

\bibitem{farah}
Ilijas Farah, \emph{Analytic quotients: theory of liftings for quotients over
  analytic ideals on the integers}, Mem. Amer. Math. Soc. \textbf{148} (2000),
  no.~702, xvi+177. \MR{1711328}

\bibitem{good-projective}
V.~Fischer, S.~D. Friedman, D.~Schrittesser, and A.~T{\"o}rnquist, \emph{Good
  projective witnesses},
  \href{https://arxiv.org/abs/1904.05823}{\nolinkurl{arXiv:1904.05823
  [math.LO]}}.

\bibitem{mcg-cohen}
V.~Fischer, D.~Schrittesser, and A.~T\"{o}rnquist, \emph{A co-analytic
  {C}ohen-indestructible maximal cofinitary group}, J. Symb. Log. \textbf{82}
  (2017), no.~2, 629--647. \MR{3663420}

\bibitem{fischer}
Vera Fischer and Diego~Alejandro Mejia, \emph{Splitting, bounding, and almost
  disjointness can be quite different}, Canad. J. Math. \textbf{69} (2017),
  no.~3, 502--531. \MR{3679685}

\bibitem{mcg-borel}
Haim Horowitz and Saharon Shelah, \emph{A {B}orel maximal cofinitary group},
  {\href{https://arxiv.org/abs/1610.01344}{\nolinkurl{arXiv:1610.01344
  [math.LO]}}}, October 2016.

\bibitem{medf-borel}
\bysame, \emph{A {B}orel maximal eventually different family},
  \href{https://arxiv.org/abs/1605.07123}{\nolinkurl{arXiv:1605.07123
  [math.LO]}}, May 2016.

\bibitem{horowitz-shelah-inacc}
\bysame, \emph{Can you take {T}\"ornquist's inaccessible away?},
  \href{http://arxiv.org/abs/1605.02419}{\nolinkurl{arXiv:1605.02419
  [math.LO]}}, May 2016.

\bibitem{horowitz-shelah-graphs}
\bysame, \emph{Maximal independent sets in {B}orel graphs and large cardinals},
  {\href{https://arxiv.org/abs/1606.04765}{\nolinkurl{arXiv:1606.04765
  [math.LO]}}}, June 2016.

\bibitem{schindler3}
V.~Kanovei and R.~Schindler, \emph{Definable hamel bases and
  $\textsf{\textup{ac}}_\omega(\mathbb{R})$}, submitted; availabel at R.\
  Schindler's website.

\bibitem{kastermans}
Bart Kastermans, \emph{The complexity of maximal cofinitary groups}, Proc.
  Amer. Math. Soc. \textbf{137} (2009), no.~1, 307--316. \MR{2439455
  (2009k:03086)}

\bibitem{miller}
Arnold~W. Miller, \emph{Infinite combinatorics and definability}, Ann. Pure
  Appl. Logic \textbf{41} (1989), no.~2, 179--203. \MR{983001 (90b:03070)}

\bibitem{raghavan-steprans}
D.~Raghavan and J.~Steprans, \emph{The almost disjointness number in products},
  Available at J.\ Steprans' website, August 2019.

\bibitem{schindler2}
R.~Schindler, J.~Brendle, F.~Castiblanco, L.~Wu, and L.~Yu, \emph{A model with
  everything except for a well-ordering of the reals}, submitted; availabel at
  R.\ Schindler's website.

\bibitem{medf-closed}
D.~Schrittesser, \emph{On {H}orowitz and {S}helah's {B}orel maximal eventually
  different family}, RIMS Ky{\^o}ky{\^u}roku No. 2042, 2017, pp.~99--105.

\bibitem{medf-closed-v2}
\bysame, \emph{On {H}orowitz and {S}helah's {B}orel maximal eventually
  different family (pre{\-}print)},
  \href{https://arxiv.org/abs/1703.01806}{\nolinkurl{arXiv:1703.01806
  [math.LO]}}, contains some material not included in the published version,
  March 2017.

\bibitem{medf-compact}
\bysame, \emph{Compactness of maximal eventually different families}, Bull.
  Lond. Math. Soc. \textbf{50} (2018), no.~2, 340--348. \MR{3830124}

\bibitem{higher-ramsey}
D.~Schrittesser and A.~T\"ornquist, \emph{The {R}amsey property and higher
  dimensional mad families},
  \href{https://arxiv.org/abs/2003.10944}{\nolinkurl{arXiv:2003.10944
  [math.LO]}}.

\bibitem{pnas}
\bysame, \emph{The {R}amsey property implies no mad families}, Proc. Nat. Acad.
  Sci. U.S.A. \textbf{116} (2019), no.~38, 18883--18887.

\bibitem{zoltan}
Zolt\'{a}n Vidny\'{a}nszky, \emph{Transfinite inductions producing coanalytic
  sets}, Fund. Math. \textbf{224} (2014), no.~2, 155--174. \MR{3180585}

\bibitem{zhang-maximal}
Yi~Zhang, \emph{Maximal cofinitary groups}, Arch. Math. Logic \textbf{39}
  (2000), no.~1, 41--52. \MR{1735183 (2001d:03120)}

\end{thebibliography}
\bibliographystyle{amsplain}

 \end{document}